\DeclareMathAlphabet\mathbfcal{OMS}{cmsy}{b}{n}
\newtheorem{Prop}{Proposition}
\newtheorem{exmp}{Example}
\newcommand{\tr}[1]{\mathop{{\rm \bf tr}\left[#1\right]}\nolimits}
\newcommand{\Di}[1]{\mathop{\partial_i #1}}                  % bez scobok
\newcommand{\DDi}[1]{\mathop{\left( \partial_i #1 \right)}}  % so scobkami
\begin{document}
\title{UD-based Pairwise and MIMO Kalman-like Filtering for Estimation of Econometric Model Structures}

\author{M.V.~Kulikova and J.V.~Tsyganova and G.Yu. Kulikov
\thanks{Manuscript received ??; revised ??.  The first and third authors acknowledge the financial support of the Portuguese FCT~--- \emph{Funda\c{c}\~ao para a Ci\^encia e a Tecnologia}, through the projects UIDB/04621/2020 and UIDP/04621/2020 of CEMAT/IST-ID, Center for Computational and Stochastic Mathematics, Instituto Superior T\'ecnico, University of Lisbon.}
\thanks{The first and third authors are with
CEMAT, Instituto Superior T\'{e}cnico, Universidade de Lisboa,
          Av. Rovisco Pais 1,  1049-001 LISBOA, Portugal. The second author is with Ulyanovsk State University,  Str. L. Tolstoy 42, 432017 Ulyanovsk, Russian Federation. Emails: maria.kulikova@ist.utl.pt; TsyganovaJV@gmail.com; gkulikov@math.ist.utl.pt}}

\markboth{IEEE Transactions on Automatic Control}{}
%\markboth{PREPRINT}{}

\maketitle

\begin{abstract}
One of the modern research lines in econometrics studies focuses on translating a wide variety of structural econometric models into their state-space form, which allows for efficient unknown dynamic system state and parameter estimations by the Kalman filtering scheme. The mentioned trend yields advanced state-space model structures, which demand innovative estimation techniques driven by application requirements to be devised. This paper explores both the linear time-invariant multiple-input, multiple-output system (LTI MIMO) and the pairwise Markov model (PMM) with the related pairwise Kalman filter (PKF). In particular, we design robust gradient-based adaptive Kalman-like filtering methods for the simultaneous state and parameter estimation in the outlined model structures. Our methods are fast and accurate because their analytically computed gradient is utilized in the calculation instead of its numerical approximation. Also, these employ the numerically robust $UDU^\top$-factorization-based Kalman filter implementation, which is reliable in practice. Our novel techniques are examined on numerical examples and used for treating one stochastic model in econometrics.
\end{abstract}

\begin{keywords}
State-space models, pairwise Kalman filter, parameter estimation, filter sensitivities, square-root filtering.
\end{keywords}

\IEEEpeerreviewmaketitle

%===============================================================================

\section{Introduction}

The state-space approach using a Kalman filter (KF) has been extensively explored in the econometrics discipline in recent years. Although it took some time to conceptualize and adopt optimal filtering theory to econometric applications, its popularity has spread rapidly since the mid-1980s and the first contributions
promoted the KF methodology in the financial industry~\cite{1989:Harvey:book,2012:Durbin:book}.
An excellent and comprehensive survey of such state-space-approach-based applications in econometrics can be found in~\cite{2017:Wilcox}. Among the earliest implementations, we can distinguish the time-varying regression coefficient estimation problem in~\cite{1973:Sarris,1973:Rosenberg} and the time-varying variance (volatility) model calibration discussed in~\cite{1991:Hall:note,1991:Hall,1994:Ruiz,1994:Harvey,2009:Koopman,2006:Tims}. However, the modern econometric applications imply even more sophisticated  system structures stemmed from the modeling complicated `stylized' facts such as the non-stationarity of underlying stochastic processes discussed in~\cite{2006:Dahlhaus,2008:Engle}. In turn, all this requests that the advanced state-space model structures and their related estimation techniques~\cite{2007:Elliott,2013:Ossandon,2017:Ferreira} to be utilized as well as the innovative filtering methods dealing with nonlinearity and/or heavy-tailed uncertainties~\cite{1998:Fridman,2004:Jacquier,2014:Harvey}. In particular, this paper focuses on the first trend, i.e. on the design of advanced estimation methods for the innovative state-space model structures, which extend the classical state-space representation. More precisely, we investigate the linear time-invariant multiple-input, multiple-output (LTI MIMO) systems discussed in~\cite{2005:Gibson} as well as the pairwise Markov models (PMMs) with the associated pairwise KF (PKF) proposed in~\cite{2003:Pieczynski,2011:Lanchantin}.

The maximum likelihood estimation of econometrics models is often carried out by the Expectation-Maximization (EM) algorithm; e.g. see~\cite{1982:Shumway,1993:Koopman,2007:Elliott}, and many others. For the stochastic models under consideration, the cited estimation strategy has been
designed as well. For instance, such EM-based estimators are found for the LTI MIMO systems in~\cite{2005:Gibson} and for the linear PMMs in~\cite{2013:Nemesin}, respectively. An attractive feature of the EM-algorithm is its {\it derivative-free} nature, i.e. the likelihood function gradient (score) evaluation is not required there~\cite{1977:Dempster}. Among the EM-method's disadvantages, we may mention its slow convergence~\cite{2014:Mader} and difficulties in {\it partial} learning~\cite{2015:Nemesin}. The alternative {\it gradient-based} approach provides a higher convergence rate in comparison to that of the EM-algorithm strategy but demands additionally the score evaluation~\cite{1974:Gupta}. The likelihood function and its derivatives computation (with respect to the unknown system's parameters) can be fulfilled by using either the filtering equations, only, (see~\cite{1989:Zadrozny,2000:Klein}) or by implementing the united filtering and smoothing approach~\cite{1988:Segal,1992:Koopman,2008:Wills}. Below, we avoid any smoother implementation with its associated derivative computation but restrict ourselves to the first technology outlined.

Motivated by our results~\cite{2013:Tsyganova:IEEE,2017:Kulikova:IFAC}, we design robust gradient-based adaptive filtering methods for simultaneous state and parameter estimations in the model structures under consideration. Following the KF-based score evaluation technique in~\cite{1989:Zadrozny}, we present our filtering solution to the LTI MIMO and PMM state-space scenarios. However, the cited derivative evaluation scheme is grounded in differentiation of the conventional KF, which suffers from its numerical instabilities stemmed from the roundoff involved; see~\cite[Chapter~7]{2015:Grewal:book} for more details. In contrast, our novel methods proposed here are rooted in the numerically stable $UDU^\top$ factorization mechanization, which is reliable in practical calculations.
In addition, the use of analytically computed gradients instead of their numerical approximations in the optimization tool utilized reduces the execution time of the computational scheme designed. A similar conclusion has been recently reported for nonlinear systems in~\cite{2016:Boiroux}. Thus, the new methods derived are fast, numerically stable, and provide reliable computations in the econometric modeling. It is worth mentioning here that alternative factored-form  techniques do exist in the classical KF theory. These are based either on the Cholesky factorization or the singular value decomposition~\cite{2009:Kulikova:IEEE,2017:Tsyganova:IEEE}. Note that one Cholesky-based algorithm is recently developed in~\cite{2017:Kulikova:IEEE:PKF} in the realm of PKF, only. Meanwhile, no method has been still designed for treating the LTI MIMO systems, except those covered in~\cite{2008:Wills} where the study focuses on the optimization method itself rather than on its numerical robustness.

In fact, the derivation of novel filtering algorithms is valued for its own sake because it provides practitioners with a diversity of methods giving a fair capacity for choosing the most relevant one to a real-world application at hand, its complexity and accuracy requirements. For instance, the Cholesky gradient-based adaptive KF methodology is shown to succeed in the nonlinear biomedical signal processing~\cite{2018:Boureghda}. In contrast, our approach enjoys the square-root-free Cholesky-based factorization, which can be advantageous in econometrics estimation tasks. Here, we explore the time-varying GARCH-in-Mean(1,1) models and their estimation techniques, which are used for recovering the weak-form market efficiency, as that elaborated in~\cite{1999:Zalewska}. Its severe difficulty lies in the assumption of time-varying regression coefficients employed. In particular, our market efficiency study concerns with the EURO STOXX 50 index, which covers the largest and most liquid $50$ stocks in $18$ European countries.

\section{Problem statement and conventional methods} \label{problem:statement}

Consider a classical state-space model with the notation traditionally used in the econometrics discipline~\cite{2012:Durbin:book}, as follows:
  \begin{align}
     \alpha_{k+1}& =  T \alpha_{k} + \eta_k, &   \eta_k & \sim  {\cal N} (0, Q),  \label{eq:st:1} \\
     y_k & =  Z \alpha_k + \varepsilon_k, &  \varepsilon_k & \sim  {\cal N} (0, H)    \label{eq:st:2}
  \end{align}
where the subscript $k$ ($k \ge 0$) refers to the discrete time, i.e. $y_k$ means $y(t_k)$ and so on. The random vectors $\alpha_k \in \mathbb R^n$ and $y_k \in \mathbb R^m$ are the unknown (hidden) dynamic state to be estimated and available measurement vector, respectively.
%Equations~\eqref{eq:st:1} and~\eqref{eq:st:2} are called the {\it state} and {\it observation} equations, respectively.
The process and measurement noise covariances are symmetric, positive (semi-) definite matrices, i.e. $Q\ge 0$ and $H >0$. The uncertainties $\eta_k$ and $\varepsilon_k$ as well as the initial state vector $\alpha_0 \sim {\cal N}(\bar \alpha_0, \Pi_0)$ where $\Pi_0 > 0$ are assumed to be mutually uncorrelated Gaussian white noise processes.

Any filtering technique aims to recover the unknown (hidden) random sequence $\{\alpha_k\}_{k=0}^{N} = \{\alpha_0, \ldots, \alpha_N\}$  from the observed one $\{y_k\}_{k=0}^{N} = \{y_0, \ldots, y_N\}$. More formally, having sampled the measurements $\{ y_0, \ldots, y_k \}$, the classical KF is a recursive method for yielding the minimum-linear-expected-mean-square-error estimate $\hat \alpha_{k|k}$ of dynamic state~\eqref{eq:st:1} at time $t_k$; see~\cite[Theorem~9.2.1]{2000:Kailath:book}.

Many econometrics models often maintain a more general state-space form. The observation equation~\eqref{eq:st:2} typically includes an extra sequence $\{x_k\}_{k=0}^{N} = \{x_0, \ldots, x_N\}$ of explanatory variables; see the classical GARCH$(p,q)$ description in~\cite{1986:Bollerslev}, which  incorporates a term $x_k^\top\beta$ with the vector $\beta$ standing for the regression coefficients involved (usually, these are unknown and need to be estimated in parallel with the state). Thus, we arrive at the following extended form of measurement equation~\eqref{eq:st:2} in the above stochastic system:
\begin{align}
   y_k & =  Z \alpha_k + \beta x_k + \varepsilon_k, &  \varepsilon_k & \sim  {\cal N} (0, H)    \label{eq:st:2:e}
\end{align}
where the vector $x_k \in {\mathbb R}^d$ refers to the explanatory variables, which is also interpreted as a controllable input in engineering applications~\cite{2017:Wilcox}. Eventually, the conventional state-space representation~\eqref{eq:st:1} and \eqref{eq:st:2} is extended to the following more general LTI MIMO form~\cite{2005:Gibson}:
  \begin{align}
  \label{eq:lti:model}
  \begin{bmatrix}
   \alpha_{k+1} \\
   y_k
   \end{bmatrix}  = &
  \begin{bmatrix}
   T &  \!\!B \\
   Z &  \!\!\beta
   \end{bmatrix}
     \begin{bmatrix}
   \alpha_{k} \\
   x_{k}
   \end{bmatrix} +
        \begin{bmatrix}
   \eta_{k} \\
   \varepsilon_{k}
   \end{bmatrix}\!,
        \begin{bmatrix}
   \eta_{k} \\
   \varepsilon_{k}
   \end{bmatrix}
\!\sim \!{\cal N}\!\!\left(
\begin{bmatrix}
0 \\ 0
\end{bmatrix}\!\!,
\begin{bmatrix}
Q & \!\!S  \\
S^\top & \!\!H
\end{bmatrix}
\right)
\end{align}
where the cross-correlation between the process and measurement uncertainties is additionally assumed. Note that the substitution of $\beta=0$, $B=0$ and $S=0$ into~\eqref{eq:lti:model} reduces it to the usual state-space stochastic system~\eqref{eq:st:1} and \eqref{eq:st:2}. The KF-like estimator for the LTI MIMO model~\eqref{eq:lti:model} is commonly presented as follows~\cite[Lemma~3.2]{2005:Gibson}:
%_______________________________________
\begin{codebox}
\Procname{{\bf Algorithm~1}. $\proc{LTI MIMO KF}$ ({\it conventional algorithm})}
\zi \textsc{Initialization:} ($k=0$)
\li Set $\hat \alpha_{0|0} = \bar \alpha_0$ and $P_{0|0} = \Pi_0$;
\li $\overline{T}  = T - SH^{-1}Z$, $\overline{B}  = B - SH^{-1}\beta$, $\overline{Q}  = Q - SH^{-1}S^\top$;
\zi \textsc{Time Update}: ($k=\overline{1,N}$) \Comment{\small\textsc{Priori estimation}}
\li \>$\hat \alpha_{k|k-1}  = \overline{T} \hat \alpha_{k-1|k-1} + \overline{B}x_{k-1}+SH^{-1}y_{k-1}$; \label{lti:p:X}
\li \>$P_{k|k-1}  = \overline{T} P_{k-1|k-1}\overline{T}^\top + \overline{Q}$; \label{lti:p:P}
\zi \textsc{Measurement Update}: \Comment{\small\textsc{Posteriori estimation}}
\li \>$K_{k}  = P_{k|k-1}Z^\top R_{e,k}^{-1}$ where $R_{e,k} = ZP_{k|k-1}Z^\top+H$; \label{lti:f:K}
\li \>$\hat \alpha_{k|k}  =  \hat \alpha_{k|k-1}+K_{k}e_k$ where $e_k = y_k-Z\hat \alpha_{k|k-1}-\beta x_k$;  \label{lti:f:X}
\li \>$P_{k|k}  = (I - K_{k}Z)P_{k|k-1}$. \label{lti:f:P}
\end{codebox}
%\setlinenumberplus{eq:split}{1}
%_______________________________________

We remark that the described LTI MIMO system's structure can be even more sophisticated. In particular, the linear Gaussian PMM under study follows from the state-space form elaborated in~\cite{2013:Nemesin}, i.e.
  \begin{align}
  \label{eq:pkf:model}
  \begin{bmatrix}
   \alpha_{k+1} \\
   y_k
   \end{bmatrix} \!\! = &
  \begin{bmatrix}
   T &  \!\!B \\
   Z &  \!\!\beta
   \end{bmatrix}
     \begin{bmatrix}
   \alpha_{k} \\
   y_{k-1}
   \end{bmatrix}\!\! +
        \begin{bmatrix}
   \eta_{k} \\
   \varepsilon_{k}
   \end{bmatrix}\!\!,
        \begin{bmatrix}
   \eta_{k} \\
   \varepsilon_{k}
   \end{bmatrix}
\!\sim \!{\cal N}\!\!\left(
\begin{bmatrix}
0 \\ 0
\end{bmatrix}\!\!,
\begin{bmatrix}
Q & \!\!S  \\
S^\top & \!\!H
\end{bmatrix}
\right)
\end{align}
where the pair of two random sequences $(\alpha_k,y_k)$ is assumed to be Markovian; see further details in~\cite{2011:Lanchantin}.
While the PKF is first derived in~\cite[Proposition 1]{2003:Pieczynski}, its summary is available in~\cite{2013:Nemesin} as follows:
%_______________________________________
\begin{codebox}
\Procname{{\bf Algorithm~2}. $\proc{PKF}$ ({\it conventional implementation})}
\zi \textsc{Initialization:} ($k=0$)
\li Set $\hat \alpha_{0|0} = \bar \alpha_0$ and $P_{0|0} = \Pi_0$;
\li $\overline{T}  = T - SH^{-1}Z$, $\overline{B}  = B - SH^{-1}\beta$, $\overline{Q}  = Q - SH^{-1}S^\top$;
\zi \textsc{Time Update}: ($k=\overline{0,N-1}$) \Comment{\small\textsc{Priori estimation}}
\li \>$\hat \alpha_{k+1|k}  = \overline{T} \hat \alpha_{k|k} + \overline{B}y_{k-1}+SH^{-1}y_{k}$; \label{pkf:p:X}
\li \>$P_{k+1|k}  = \overline{T} P_{k|k}\overline{T}^\top + \overline{Q}$; \label{pkf:p:P}
\zi \textsc{Measurement Update}: \Comment{\small\textsc{Posteriori estimation}}
\li \>$K_{k+1}  = P_{k+1|k}Z^\top R_{e,k+1}^{-1}$ with $R_{e,k+1} \!= ZP_{k+1|k}Z^\top\!+H$; \label{pkf:f:K}
\li \>$e_{k+1} = y_{k+1}-Z\hat \alpha_{k+1|k}-\beta y_{k}$; \label{pkf:f:e}
\li \>$\hat \alpha_{k+1|k+1}  =  \hat \alpha_{k+1|k}+K_{k+1}e_{k+1}$; \label{pkf:f:X}
\li \>$P_{k+1|k+1}  = (I - K_{k+1}Z)P_{k+1|k}$. \label{pkf:f:P}
\end{codebox}
%\setlinenumberplus{eq:split}{1}
%_______________________________________

Note that the econometrics models are usually parameterized, i.e. in addition to the filtering task, one needs to pay attention to a parameter estimation one as well. For example, the regression coefficients $\beta$ are typically unknown and should be estimated together with the dynamic state $\alpha_k$ from the available data $Y_N:=\{ y_k \}_{k=0}^N$. In general, entries of the system's matrices $\{T, Z, B, \beta, Q, H\}$ may depend on unknown parameters $\theta \in {\mathbb R}^p$, which are also to be estimated. The maximum likelihood approach is a common tool for parameter estimations in econometrics, where the underlying filter allows the following log likelihood function (LF) to be evaluated, as explained in~\cite{1965:Schweppe,1989:Harvey:book}:
\begin{align}
\ln L\left(\theta | Y_N\right)  = - c_0 - \frac{1}{2}\sum \limits_{k=0}^N \left\{\ln\left(\det R_{e,k}\right) + e_k^\top R_{e,k}^{-1}e_k \right\} \label{eq:llf}
\end{align}
with $c_0$ being a constant. The derivation of~\eqref{eq:llf} relies on the optimality of the KF in the linear Gaussian state-space models where the filters' innovations obey $e_k \sim {\cal N}\left(0, R_{e,k}\right)$. In non-Gaussian settings, the KF exhibits a sub-optimal behaviour, only, and the mentioned property is violated. Therefore, equation~\eqref{eq:llf} yields a quasi-maximum likelihood estimation in such non-Gaussian settings, as discussed in~\cite{1994:Ruiz,1994:Harvey}.

Eventually, any parameter estimation mechanization includes a filter utilized for computing the log LF~\eqref{eq:llf} and an optimization routine for determining the optimal parameter values $\theta^{*}$. All this sets up an {\it adaptive} filtering framework because it turns out that the filter learns itself based on the information available from the measurements.

The gradient-based optimization methods converge faster. So, these are preferable ones in practice~\cite{1974:Gupta}. Besides, such methods
demand the log LF gradient evaluation (which is called the {\it score} in statistics) and might require the Fisher information matrix computation as well. It is readily seen from formula~\eqref{eq:llf} that the score calculation yields the problem of the filter differentiation, which results in the vector- and matrix-type equations known as the {\it filter sensitivity} equations. An obvious solution for deriving the sensitivity model is to differentiate the underlying filtering equations with respect to each $\theta_i$, $i=1, \ldots, p$. This standard approach is implemented in~\cite{1989:Zadrozny,2000:Klein} and~\cite{2008:Wills} for the classical state-space models and LTI MIMO systems, respectively. In the PKF, the related sensitivity equations have never been presented explicitly. However, it is worth noting that these are similar to those in~\cite{1989:Zadrozny,2000:Klein}. It follows from the fact that the observations $y_k$ and the explanatory variables (control input in engineering applications) $x_k$  do not depend on the unknown system's parameters (i.e. their realizations are independent of variations in $\theta$) and, hence, their differentials equal to zero. Although it might seem that the problem has been solved, the truth is that this traditional approach is actually unreliable because of numerical instabilities arisen in the conventional KF, which affect their derivative calculations as well. The roundoff errors committed might force the filter to fail and, then, demolish performance of the entire adaptive KF scheme applied in various engineering tasks. This issue has been already addressed in our previous studies~\cite{2009:Kulikova:IEEE,2013:Tsyganova:IEEE,2017:Tsyganova:IEEE} by using the robust factored-form (square-root) KF algorithms and their derivatives. The cited techniques are shown to be reliable and numerically stable. That is why these are promising tools for practical use. Further, we follow the modified Cholesky-based KF methodology designed in~\cite{2013:Tsyganova:IEEE} and extend it to the LTI MIMO and PMM models under consideration.

\section{Main results} \label{main:result}
\subsection{The UDU$^\top$-factorization-based filtering}

The $UDU^\top$-factorization-based approach to implementing the KF uses the modified Cholesky decomposition ${P=\bar U_{P}D_{P}\bar U^\top_{P}}$ where $D_{P}$ denotes a diagonal matrix and $\bar U_{P}$ stands for an upper (or lower) triangular matrix with the unitary main diagonal. This factorization is performed in the filter's initialization step. Then, the standard filtering equations are to be mathematically re-arranged for propagating and updating these factors rather than the entire covariance matrix $P$ in the underlying Riccati recursion. The outlined methodology ensures the symmetric form of the error covariance and its positive semi-definiteness at any time instance $t_k$. Note that this covariance is trivially restored from its factors as follows: $P:=\bar U_{P}D_{P}\bar U^\top_{P}$. We term such algorithms as {\it UD-based} KFs. In what follows, the operation $A \oplus B$ with square matrices $A$ and $B$ of sizes $m$ and $n$ results in the block-diagonal matrix ${\rm diag}\{A,B\}$ of size~$(m+n)\times(m+n)$.

The first UD-based KFs were designed in~\cite{1976:Thornton:PhD,1977:Bierman}. These were called the square-root-free ones and shown to have the numerical stability similar to that of the Cholesky-based algorithms (i.e. the UD-based are also more reliable in comparison to the conventional KFs), but faster because of avoiding the calculation of square roots. Note that a recursive propagation of the factors $\bar U_p$ and $D_p$ can be fulfilled by orthogonal transforms, which set up conformal (i.e. a norm- and angle-preserving) mappings. So, the estimation quality and robustness of such implementations have been greatly improved compared to the first UD-based KFs presented in~\cite{1976:Thornton:PhD,1977:Bierman}. Nowadays, the UD-based KFs  always use some sort of weighted orthogonal transforms. Here, we follow the approach grounded in the modified weighted Gram-Schmidt (MWGS) orthogonalization~\cite[Lemma~VI.4.1]{1977:Bierman:book}: given pre-arrays $\{ {\mathbb A}, {\mathbb D}_{A} \}$ with ${\mathbb A} \in {\mathbb R}^{r\times s}$, $r \ge s$, and the diagonal matrix ${\mathbb D}_{A} \in {\mathbb R}^{r\times r}>0$, compute the post-arrays $ \{ {\mathbb R}, {\mathbb D}_{R} \}$ with the block upper triangular matrix ${\mathbb R} \in {\mathbb R}^{s\times s}$ having the unitary diagonal and the diagonal one ${\mathbb D}_{R} \in {\mathbb R}^{s\times s}$ by the MWGS method, which satisfy
\begin{equation}
\label{assume:1}
{\mathbb A}^\top={\mathbb R}\:\mathfrak{W}^\top \quad \mbox{ where } \quad \mathfrak{W}^\top{\mathbb D}_{A}\mathfrak{W}={\mathbb D}_{R}
\end{equation}
where $\mathfrak{W}  \in {\mathbb R}^{r\times s}$ stands for the matrix of the MWGS transform.

The above consideration yields the following UD-based KF:
%_______________________________________
\begin{codebox}
\Procname{{\bf Algorithm~1a}. $\proc{UD-based LTI MIMO KF}$ ({\it UD-based algorithm})}
\zi \textsc{Initialization:} ($k=0$)
\li Apply the factorization: $\Pi_0 = \bar U_{\Pi_0}D_{\Pi_0}\bar U_{\Pi_0}^\top$, $H = \bar U_{H}D_{H}\bar U_{H}^\top$;
\li Set $\hat \alpha_{0|0} = \bar \alpha_0$ and $\bar U_{P_{0|0}} = \bar U_{\Pi_0}$, $D_{P_{0|0}} = D_{\Pi_0}$;
\li $\overline{T}  = T - SH^{-1}Z$, $\overline{B}  = B - SH^{-1}\beta$, $\overline{Q}  = Q - SH^{-1}S^\top$;
\li Apply the $UDU^\top$ factorization: $\overline{Q} = \bar U_{\overline{Q}}D_{\overline{Q}}\bar U_{\overline{Q}}^\top$;
\zi \textsc{Time Update}: ($k=\overline{1,N}$) \Comment{\small\textsc{Priori estimation}}
\li \>$\hat \alpha_{k|k-1}  = \overline{T} \hat \alpha_{k-1|k-1} + \overline{B}x_{k-1}+SH^{-1}y_{k-1}$; \label{lti:ud:p:X}
\li \>Build pre-arrays ${\mathbb A}$, ${\mathbb D}_A$ and apply the MWGS algorithm \label{lti:ud:p:P}
\zi \>$\phantom{\mathfrak{V}^\top}\underbrace{
\begin{bmatrix}
\overline{T} \bar U_{P_{k-1|k-1}} & \; \bar U_{\overline{Q}}
\end{bmatrix}
}_{\mbox{\scriptsize Pre-array ${\mathbb A}^\top$}}
 =
  \underbrace{
\begin{bmatrix}
\bar U_{P_{k|k-1}}
\end{bmatrix}
}_{\mbox{\scriptsize Post-array ${\mathbb R}$}}{\mathfrak V}^\top$;
\zi \>$\mathfrak{V}^\top
\underbrace{
\left[D_{P_{k-1|k-1}}\oplus D_{\overline{Q}}\right]
}_{\mbox{\scriptsize Pre-array ${\mathbb D}_{A}$}}
\mathfrak{V}
=  \underbrace{
\begin{bmatrix}
D_{P_{k|k-1}}
\end{bmatrix}
}_{\mbox{\scriptsize Post-array ${\mathbb D}_{R}$}}$;
\zi \>Read-off from the post-arrays: $\bar U_{P_{k|k-1}}$ and $D_{P_{k|k-1}}$;
\end{codebox}
\begin{codebox}
\setlinenumberplus{lti:ud:p:P}{1}
\zi \textsc{Measurement Update}: \Comment{\small\textsc{Posteriori estimation}}
\li \>Build pre-arrays ${\mathbb A}$, ${\mathbb D}_A$ and apply the MWGS algorithm \label{lti:ud:f:P}
\zi \>$\phantom{\mathfrak{Q}^\top} \underbrace{
\begin{bmatrix}
\bar U_{P_{k|k-1}} & 0\\
Z\bar U_{P_{k|k-1}} & \bar U_{H}
\end{bmatrix}
}_{\mbox{\scriptsize Pre-array ${\mathbb A}^\top$}}
 =
  \underbrace{
\begin{bmatrix}
\bar U_{P_{k|k}} & \bar K_{k}^u\\
0 & \bar U_{R_{e,k}}
\end{bmatrix}
}_{\mbox{\scriptsize Post-array ${\mathbb R}$}}{\mathfrak Q}^\top$;
\zi\> $\mathfrak{Q}^\top
\underbrace{
\left[D_{P_{k|k-1}}\oplus D_{H}\right]
}_{\mbox{\scriptsize Pre-array ${\mathbb D}_{A}$}}
\mathfrak{Q}
= \underbrace{
\left[D_{P_{k|k}}\oplus D_{R_{e,k}}\right]
}_{\mbox{\scriptsize Post-array ${\mathbb D}_{R}$}}$;
\zi \>Read-off the factors: $\bar U_{R_{e,k}}$, $D_{R_{e,k}}$, $\bar U_{P_{k|k}}$, $D_{P_{k|k}}$, $\bar K_{k}^u$;
\li \>$e_k = y_k-Z\hat \alpha_{k|k-1}-\beta x_k$; \label{lti:ud:f:ek}
\li \>$\hat \alpha_{k|k}  =  \hat \alpha_{k|k-1}+ [\bar K_{k}^u][\bar U_{R_{e,k}}]^{-1}e_k$. \label{lti:ud:f:X}
\end{codebox}
%\setlinenumberplus{imcc:ud:f:P}{1}
%_______________________________________

\begin{Prop} \label{proposition:1}
The UD-based KF equations in Algorithm~1a are algebraically equivalent to the KF formulas underlying Algorithm~1.
\end{Prop}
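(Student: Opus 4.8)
The plan is to show that each MWGS-based array factorization in Algorithm~1a encodes exactly the corresponding Riccati recursion step of Algorithm~1, by exploiting the defining property~\eqref{assume:1} of the MWGS transform. The core idea is that, given any pre-array $\mathbb{A}$ and weight $\mathbb{D}_A$, the post-arrays satisfy $\mathbb{A}^\top = \mathbb{R}\,\mathfrak{W}^\top$ with $\mathfrak{W}^\top \mathbb{D}_A \mathfrak{W} = \mathbb{D}_R$, so that $\mathbb{A}^\top \mathbb{D}_A \mathbb{A} = \mathbb{R}\,\mathbb{D}_R\,\mathbb{R}^\top$ because $\mathfrak{W}^\top \mathbb{D}_A \mathfrak{W} = \mathbb{D}_R$ together with the orthogonality-type relation linking $\mathfrak{W}$ and $\mathbb{D}_A$. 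Thus each factorization step is equivalent to matching a symmetric Gram matrix $\mathbb{A}^\top \mathbb{D}_A \mathbb{A}$ on the pre-array side with the modified-Cholesky product $\mathbb{R}\,\mathbb{D}_R\,\mathbb{R}^\top$ on the post-array side. I would carry out the verification by computing this Gram matrix for each of the two array steps and comparing it term by term with the covariance propagation and update formulas of Algorithm~1.

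First I would treat the \textsc{Time Update}. The state prediction line~\ref{lti:ud:p:X} is literally identical to line~\ref{lti:p:X} of Algorithm~1, so only the covariance step needs attention. Forming $\mathbb{A}\,\mathbb{D}_A\,\mathbb{A}^\top$ from the pre-array $\mathbb{A}^\top = \begin{bmatrix}\overline{T}\,\bar U_{P_{k-1|k-1}} & \bar U_{\overline{Q}}\end{bmatrix}$ with weight $D_{P_{k-1|k-1}} \oplus D_{\overline{Q}}$ gives
\begin{equation*}
\overline{T}\,\bar U_{P_{k-1|k-1}} D_{P_{k-1|k-1}} \bar U_{P_{k-1|k-1}}^\top \overline{T}^\top + \bar U_{\overline{Q}} D_{\overline{Q}} \bar U_{\overline{Q}}^\top = \overline{T} P_{k-1|k-1}\overline{T}^\top + \overline{Q},
\end{equation*}
which is exactly $P_{k|k-1}$ from line~\ref{lti:p:P}; on the post-array side this equals $\bar U_{P_{k|k-1}} D_{P_{k|k-1}} \bar U_{P_{k|k-1}}^\top$, so reading off the factors recovers the predicted covariance.

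Next I would treat the \textsc{Measurement Update}, which is the substantive part. Computing $\mathbb{A}\,\mathbb{D}_A\,\mathbb{A}^\top$ for the block pre-array with weight $D_{P_{k|k-1}} \oplus D_H$ produces a $2\times 2$ block symmetric matrix whose blocks I would identify with the standard quantities: the $(2,2)$ block yields $Z P_{k|k-1} Z^\top + H = R_{e,k}$, the $(2,1)$ block yields $Z P_{k|k-1}$, and the $(1,1)$ block yields $P_{k|k-1}$. Equating this with the product $\mathbb{R}\,\mathbb{D}_R\,\mathbb{R}^\top$ of the post-array, and matching blocks, would simultaneously give $\bar U_{R_{e,k}} D_{R_{e,k}} \bar U_{R_{e,k}}^\top = R_{e,k}$, the normalized-gain identity $\bar K_k^u = P_{k|k-1} Z^\top \bar U_{R_{e,k}}^{-\top}$, and the Joseph-type relation $\bar U_{P_{k|k}} D_{P_{k|k}} \bar U_{P_{k|k}}^\top = P_{k|k-1} - P_{k|k-1} Z^\top R_{e,k}^{-1} Z P_{k|k-1}$, which is precisely $(I - K_k Z) P_{k|k-1}$. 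It then follows that $[\bar K_k^u][\bar U_{R_{e,k}}]^{-1} = P_{k|k-1} Z^\top R_{e,k}^{-1} = K_k$, so line~\ref{lti:ud:f:X} coincides with line~\ref{lti:f:X}. The main obstacle, and the step demanding care, is the block-matching in this measurement update: one must verify the Schur-complement bookkeeping that turns the off-diagonal post-array block $\bar K_k^u$ into the normalized Kalman gain and simultaneously delivers the posterior factor, rather than relying on it being self-evident. Everything else is routine substitution.
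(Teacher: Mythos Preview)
Your approach is essentially the same as the paper's: both arguments exploit the identity $\mathbb{A}^\top\mathbb{D}_A\mathbb{A}=\mathbb{R}\,\mathbb{D}_R\,\mathbb{R}^\top$ (the paper phrases this as preservation of weighted inner products between block rows, you phrase it as matching the full Gram matrix) and then read off the Riccati recursion block by block, with the Schur-complement bookkeeping in the measurement-update step delivering the posterior factor.

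There is, however, one algebraic slip you should correct. The off-diagonal $(1,2)$ block of $\mathbb{R}\,\mathbb{D}_R\,\mathbb{R}^\top$ in the measurement update is $\bar K_k^u\,D_{R_{e,k}}\,\bar U_{R_{e,k}}^\top$, so equating it with the pre-array block $P_{k|k-1}Z^\top$ gives
\[
\bar K_k^u \;=\; P_{k|k-1}Z^\top\,\bar U_{R_{e,k}}^{-\top}\,D_{R_{e,k}}^{-1},
\]
not $\bar K_k^u = P_{k|k-1}Z^\top\,\bar U_{R_{e,k}}^{-\top}$ as you wrote. With your formula as stated, the subsequent claims $\bar K_k^u\,\bar U_{R_{e,k}}^{-1}=K_k$ and $\bar U_{P_{k|k}}D_{P_{k|k}}\bar U_{P_{k|k}}^\top = P_{k|k-1}-P_{k|k-1}Z^\top R_{e,k}^{-1}ZP_{k|k-1}$ would fail; with the corrected expression (which is exactly what the paper derives) they go through as you intended.
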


\begin{proof} The orthogonal transform used in equations~\eqref{assume:1} sets up a conformal mapping between the (block) rows of the pre-array ${\mathbb A}^\top$ and the rows of the post-array ${\mathbb R}$, which preserves all diagonally weighted norms. The weighted inner products underlying the algebraic manipulations shown in lines~\ref{lti:ud:p:P},~\ref{lti:ud:f:P} of Algorithm~1a yield
\[
\Bigl([\overline{T} \bar U_{P_{k-1|k-1}}, \bar U_{\overline{Q}}], [\overline{T} \bar U_{P_{k-1|k-1}}, \bar U_{\overline{Q}}]\Bigr)_{{\mathbb D}_{A}} = \Bigl([N, 0],[N, 0]\Bigr)_{D_0}
\]
where the pre-array ${\mathbb D}_{A} = {D_{P_{k-1|k-1}}\!\oplus\,D_{\overline{Q}}}$. Thus, we obtain
\[ND_0N^\top = \overline{T}\underbrace{(\bar U_{P_{k-1|k-1}}D_{P_{k-1|k-1}}\bar U_{P_{k-1|k-1}}^\top)}_{P_{k-1|k-1}}\overline{T}^\top  +
\overline{Q}.\]
A comparison of this equation to the formula in line~\ref{lti:p:P} of Algorithm~1 says that $N=\bar U_{P_{k|k-1}}$ and $D_0=D_{P_{k|k-1}}$. Then, we consider
\begin{align}
&\Bigl([\bar U_{P_{k|k-1}},  0], [\bar U_{P_{k|k-1}},  0]\Bigr)_{{\mathbb D}_{A}} = \Bigl([X, Y], [X, Y]\Bigr)_{D_1\oplus D_2}\!, \label{proof:2} \\
&\Bigl([\bar U_{P_{k|k-1}}, 0], [Z\bar U_{P_{k|k-1}}, \bar U_{H}]\Bigr)_{{\mathbb D}_{A}}\!\!= \Bigl([X, Y], [0, M]\Bigr)_{D_1\oplus D_2}\!, \label{proof:3} \\
&\Bigl([Z\bar U_{P_{k|k-1}}, \bar U_{H}], [Z\bar U_{P_{k|k-1}}, \bar U_{H}]\Bigr)_{{\mathbb D}_{A}} \label{proof:4} \\
& = \Bigl([0, M], [0,  M]\Bigr)_{D_1\oplus D_2} \mbox{ where } {\mathbb D}_{A} = D_{P_{k|k-1}}\!\oplus\,D_{H}.
\end{align}

Formula~\eqref{proof:4} implies the evident equality
\[MD_2M^\top = Z\underbrace{\bar U_{P_{k|k-1}} D_{P_{k|k-1}} \bar U_{P_{k|k-1}}^\top}_{P_{k|k-1}}Z^\top+\underbrace{\bar U_H D_H \bar U_H^\top}_{H}.\]
This and line~\ref{lti:f:K} in Algorithm~1 state that $M=\bar U_{R_{e,k}}$, $D_2= D_{R_{e,k}}$.

Next, the substitution of the matrices $M$ and $D_2$ into~\eqref{proof:3} yields
\[Y D_{R_{e,k}}\bar U_{R_{e,k}}^\top = (\bar U_{P_{k|k-1}} D_{P_{k|k-1}} \bar U_{P_{k|k-1}}^\top)Z^\top=P_{k|k-1}Z^\top.\]
It is further solved for $Y$, which leads to $Y = P_{k|k-1}Z^\top\bar U_{R_{e,k}}^{-T}D_{R_{e,k}}^{-1}$. In what follows, this value is denoted as $Y:=\bar K_k^{u}$, which is the so-called  ``normalized'' gain in the UD-based LTI MIMO KFs. Following line~\ref{lti:f:K} in Algorithm~1, we establish an obvious relationship $K_k = P_{k|k-1}Z^\top R_{e,k}^{-1} = \bar K_k^{u} \bar U_{R_{e,k}}^{-1}$. Eventually, the ``normalized'' gain is directly read-off from the post-array and, then, the equation for computing the state estimate (see line~\ref{lti:f:X} in Algorithm~1) is re-formulated in terms of the available post-array's blocks $\bar K_k^{u}$ and $\bar U_{R_{e,k}}$, and the filter's innovations $e_k$, as follows:
\[ \hat \alpha_{k|k}  =   \hat \alpha_{k|k-1}+K_{k}e_k =  \hat \alpha_{k|k-1}+ [\bar K_k^{u}] [\bar U_{R_{e,k}}]^{-1} e_k.   \]
Thus, the equation in line~\ref{lti:ud:f:X} of Algorithm~1a has been validated.

Finally, the substitution of the above-derived value $Y$ into~\eqref{proof:2} yields
\[
  P_{k|k-1}  =  \bar U_{P_{k|k-1}} D_{P_{k|k-1}} \bar U_{P_{k|k-1}}^\top = XD_1X^\top  + \bar K_k^{u} D_{R_{e,k}} (\bar K_k^{u})^\top
  \]
and the equality $K_k = P_{k|k-1}Z^\top R_{e,k}^{-1} = \bar K_k^{u} \bar U_{R_{e,k}}^{-1}$ then establishes
\begin{align*}
 XD_1X^\top & = P_{k|k-1} - \bar K_k^{u} D_{R_{e,k}} (\bar K_k^{u})^\top \\
 & = P_{k|k-1}-P_{k|k-1}Z^\top\bar U_{R_{e,k}}^{-T}D_{R_{e,k}}^{-1}D_{R_{e,k}}(K_k\bar U_{R_{e,k}})^\top \\
 & = P_{k|k-1} - P_{k|k-1}Z^\top K_k^\top = P_{k|k-1}(I - Z^\top K_k^\top),
\end{align*}
which is precisely the formula in line~\ref{lti:f:P} of Algorithm~1, because the error covariance matrix is symmetric by definition. We conclude that $X=\bar U_{P_{k|k}}$ and $D_1= D_{P_{k|k}}$. This completes the proof.
\end{proof}

Similarly, the UD-based PKF has been recently presented in~\cite{2017:Kulikova:IFAC}. Here, we briefly outline it for readers' convenience, as follows:

%_______________________________________
\begin{codebox}
\Procname{{\bf Algorithm~2a}. $\proc{UD-based PKF}$ ({\it UD-based algorithm})}
\zi \textsc{Initialization:} ($k=0$)
\li Apply the factorization: $\Pi_0 = \bar U_{\Pi_0}D_{\Pi_0}\bar U_{\Pi_0}^\top$, $H = \bar U_{H}D_{H}\bar U_{H}^\top$;
\li Set $\hat \alpha_{0|0} = \bar \alpha_0$ and $\bar U_{P_{0|0}} = \bar U_{\Pi_0}$, $D_{P_{0|0}} = D_{\Pi_0}$;
\li $\overline{T}  = T - SH^{-1}Z$, $\overline{B}  = B - SH^{-1}\beta$, $\overline{Q}  = Q - SH^{-1}S^\top$;
\li Apply the $UDU^\top$ factorization: $\overline{Q} = \bar U_{\overline{Q}}D_{\overline{Q}}\bar U_{\overline{Q}}^\top$;
\zi \textsc{Time Update}: ($k=\overline{0,N-1}$) \Comment{\small\textsc{Priori estimation}}
\li \>$\hat \alpha_{k+1|k}  = \overline{T} \hat \alpha_{k|k} + \overline{B}y_{k-1}+SH^{-1}y_{k}$; \label{pkf:ud:p:X}
\li \>Build pre-arrays ${\mathbb A}$, ${\mathbb D}_A$ and apply the MWGS algorithm \label{pkf:ud:p:P}
\zi \>$\phantom{\mathfrak{V}^\top}\underbrace{
\begin{bmatrix}
\overline{T} \bar U_{P_{k|k}} & \; \bar U_{\overline{Q}}
\end{bmatrix}
}_{\mbox{\scriptsize Pre-array ${\mathbb A}^\top$}}
 =
  \underbrace{
\begin{bmatrix}
\bar U_{P_{k+1|k}}
\end{bmatrix}
}_{\mbox{\scriptsize Post-array ${\mathbb R}$}}{\mathfrak V}^\top
$;
\zi \>$\mathfrak{V}^\top
\underbrace{
\left[D_{P_{k|k}}\oplus D_{\overline{Q}}\right]
}_{\mbox{\scriptsize Pre-array ${\mathbb D}_{A}$}}
\mathfrak{V}
=  \underbrace{
\begin{bmatrix}
D_{P_{k+1|k}}
\end{bmatrix}
}_{\mbox{\scriptsize Post-array ${\mathbb D}_{R}$}}$;
\zi \>Read-off from the post-arrays: $\bar U_{P_{k+1|k}}$ and $D_{P_{k+1|k}}$;
\zi \textsc{Measurement Update}: \Comment{\small\textsc{Posteriori estimation}}
\li \>Build pre-arrays ${\mathbb A}$, ${\mathbb D}_A$ and apply the MWGS algorithm \label{pkf:ud:f:P}
\zi \>$\phantom{\mathfrak{Q}^\top} \underbrace{
\begin{bmatrix}
\bar U_{P_{k+1|k}} & 0\\
Z\bar U_{P_{k+1|k}} & \bar U_{H}
\end{bmatrix}
}_{\mbox{\scriptsize Pre-array ${\mathbb A}^\top$}}
 =
  \underbrace{
\begin{bmatrix}
\bar U_{P_{k+1|k+1}} & \bar K_{k+1}^u\\
0 & \bar U_{R_{e,k+1}}
\end{bmatrix}
}_{\mbox{\scriptsize Post-array ${\mathbb R}$}}{\mathfrak Q}^\top$;
\zi\> $\mathfrak{Q}^\top
\underbrace{
\left[D_{P_{k+1|k}}\oplus D_{H}\right]
}_{\mbox{\scriptsize Pre-array ${\mathbb D}_{A}$}}
\mathfrak{Q}
= \underbrace{
\left[D_{P_{k+1|k+1}}\oplus D_{R_{e,k+1}}\right]
}_{\mbox{\scriptsize Post-array ${\mathbb D}_{R}$}}$;
\zi \>Get $\bar U_{R_{e,k+1}}$, $D_{R_{e,k+1}}$, $\bar U_{P_{k+1|k+1}}$, $D_{P_{k+1|k+1}}$, $\bar K_{k+1}^u$;
\li \>$e_{k+1} = y_{k+1}-Z\hat \alpha_{k+1|k}-\beta y_k$; \label{pkf:ud:f:ek}
\li \>$\hat \alpha_{k+1|k+1}  =  \hat \alpha_{k+1|k}+[\bar K_{k+1}^u][\bar U_{R_{e,k+1}}]^{-1}e_{k+1}$. \label{pkf:ud:f:X}
\end{codebox}
%_______________________________________

\subsection{The UDU$^\top$-based filters' derivative computation}

The gradient-based adaptive filtering schemes require the log LF gradient evaluation (with respect to unknown system's parameters). We recall that the log LF in formula~\eqref{eq:llf} can be expressed in terms of the UD-based filters' quantities under consideration, as follows~\cite{2013:Tsyganova:IEEE}:
\[
 \ln L\left(\theta | Y_N\right) =
- c_0 - \frac{1}{2} \sum \limits_{k=1}^N \left\{
  \ln\left(\det D_{R_{e,k}}\right)+ \bar e_k^\top D^{-1}_{R_{e,k}}\bar e_k
\right\}
\]
where the vector $\bar e_{k}=\bar U_{R_{e,k}}^{-1}e_{k}$ stands for the so-called ``normalized'' innovations of the corresponding UD-based KF implementation.

We denote the partial derivative of $(m \times n)$-matrix $A$ with respect to the $i$th entry in the parameter vector $\theta \in {\mathbb R}^p$ as $\Di{A} = \partial A/ \partial \theta_i$, $i=1,\ldots,p$. Then, the score admits the following representation in terms of the $UDU^\top$ factorization of the covariance matrix $R_{e,k}$~\cite{2013:Tsyganova:IEEE}:
\begin{align*}
\Di{\ln L\left(\theta | Y_N\right)} & =
-\frac{1}{2}\sum \limits_{k=1}^N \left\{
\tr { \DDi{D_{R_{e,k}}} D^{-1}_{R_{e,k}}}
 \right.\\
 & \left. - 2\DDi{\bar e_k^\top}D^{-1}_{R_{e,k}}\bar e_k -\bar e_k^\top D^{-2}_{R_{e,k}}\DDi{D_{R_{e,k}}}\bar e_k
\right\}.
\end{align*}
This formula requires the partial derivatives $\{ \Di{\bar e_k} \}$ and $\{ \Di{D_{R_{e,k}}} \}$ of the UD-based filters' variables to be available. For computing these quantities, one needs to express the {\it filter sensitivity} equations in terms of the UD-based KF under consideration. Thus, our next goal is to derive the sensitivity models for the LTI MIMO estimator (Algorithm~1a) and PKF (Algorithm~2a) that are mathematically equivalent to the standard approach based on the direct differentiation of the KF equations in Algorithms~1 and~2, respectively.
The theoretical result presented in~\cite[Lemma~1]{2013:Tsyganova:IEEE} suggests the computational technique, which can be summarized in the form of the following pseudocode:

%_______________________________________
\begin{codebox}
\Procname{{\bf Differentiated UD scheme}: $\proc{Diff. UD}({\mathbb A}, {\mathbb D}_A, {\mathbb A}'_{\theta}, ({\mathbb D}_A)'_{\theta})$}
\zi {\bf Input:} ${\mathbb A}$, ${\mathbb D}_A$, ${\mathbb A}'_{\theta}$, $({\mathbb D}_A)'_{\theta}$. \quad \Comment{\it \small Pre-arrays and its derivatives}
\li Apply~\eqref{assume:1} to the pre-arrays ${\mathbb A}$, ${\mathbb D}_A$. Get and save $\mathfrak{W}$, ${\mathbb R}$, ${\mathbb D}_R$;
\li Compute the matrix product $M_0 = \mathfrak{W}^\top{\mathbb D}_A{\mathbb A}'_\theta {\mathbb R}^{-\top}$;
\li $M_0=\bar L_0 + D_0 + \bar U_0$; \Comment{\it \small  Split into strictly lower triangular, }
\zi \phantom{$M_0=\bar L_0 + D_0 + \bar U_0$;} \Comment{\it \small diagonal, strictly upper triangular parts}
\li Compute the matrix product $M_2 = \mathfrak{W}^\top({\mathbb D}_A)'_\theta \mathfrak{W}$;
\li $M_2=\bar L_2 + D_2 + \bar U_2$; \Comment{\it \small  Split into strictly lower triangular, }
\zi \phantom{$M_2=\bar L_2 + D_2 + \bar U_2$;} \Comment{\it \small diagonal, strictly upper triangular parts}
\li Given $\bar L_0$, $\bar U_0$, $\bar U_2$, ${\mathbb R}$, ${\mathbb D}_R$, find ${\mathbb R}'_{\theta} = {\mathbb R}\left({\bar L}^\top_0+{\bar U}_0+{\bar U}_2\right){\mathbb D}^{-1}_R$;
\li Given $D_0$ and $D_2$, compute the derivative $\left({\mathbb D}_R\right)'_\theta=2D_0+D_2$;
\zi {\bf Output:} ${\mathbb R}$, ${\mathbb D}_R$ and $({\mathbb R})'_{\theta}$, $({\mathbb D}_R)'_{\theta}$. \Comment{\it \small Post-arrays and  derivatives}
\end{codebox}
%_______________________________________

Having accommodated this differentiated UD-based KF to the LTI MIMO estimator in Algorithm~1a, we arrive at its extended version, which allows the filter's sensitivities to be evaluated as follows:
\begin{codebox}
\Procname{{\bf Algorithm~1b}. $\proc{Differentiated UD-based LTI MIMO KF}$}
\zi \textsc{Initialization:} ($k=0$, $i=\overline{1,p}$)
\li \; Repeat initialization of Algorithm~1a in lines~1--4;
\li \; Compute $\Di{\overline{T}}$, $\Di{\overline{B}}$, $\Di{\overline{Q}}$, $\Di{\bar U_{\Pi_0}}$, $\Di{D_{\Pi_0}}$, $\Di{\bar \alpha_0}$;
\label{diff:lti:ud:initial1}
\li \; Set $\Di{\bar U_{P_{0|0}}}\!\! = \Di{\bar U_{\Pi_0}}$, $\Di{D_{P_{0|0}}}\!\! = \Di{D_{\Pi_0}}$, $\Di{\hat \alpha_{0|0}} = \Di{\bar \alpha_0}$; \label{diff:lti:ud:initial2}
\zi \textsc{Time Update}: ($k=\overline{1,N}$, $i=\overline{1,p}$) \Comment{\small\textsc{Priori estimation}}
\li \; Find {\it a priori} estimate $\hat \alpha_{k|k-1}$ from line~\ref{lti:ud:p:X} of Algorithm~1a;
\li \; Compute $\Di{\hat \alpha_{k|k-1}} = \DDi{\overline{T}}\hat \alpha_{k-1|k-1} + \overline{T}\DDi{\hat \alpha_{k-1|k-1}}$
\zi \; $+\DDi{\overline{B}} x_{k-1} + \DDi{S }H^{-1}y_{k-1} - S H^{-1}\DDi{H}H^{-1}y_{k-1}$; \label{diff:lti:p:X}
\li \; Build the pre-arrays ${\mathbb A}$, ${\mathbb D}_A$ in line~\ref{lti:ud:p:P} of Algorithm~1a; \label{diff:p:pre:array1}
\li \; Find the pre-arrays' derivatives $\Di{\mathbb A}$ and $\Di{{\mathbb D}_A}$;
\li \; Get $\left[{\mathbb R}, {\mathbb D}_R, \Di{{\mathbb R}}, \Di{{\mathbb D}_R} \right]\leftarrow$\verb"Diff. UD"(${\mathbb A}$, ${\mathbb D}_A$, $\Di{\mathbb A}$, $\Di{{\mathbb D}_A}$);
\li \; $\left\{ D_{P_{k|k-1}}, \: \Di{D_{P_{k|k-1}}} \right\} \leftarrow$ read-off from the ${\mathbb D}_R$ and $\Di{{\mathbb D}_R}$;
\li \; $\left\{\bar U_{P_{k|k-1}}, \: \Di{\bar U_{P_{k|k-1}}} \right\}\leftarrow$ read-off from the ${\mathbb R}$ and $\Di{{\mathbb R}}$; \label{diff:p:pre:array2}
\zi \textsc{Measurement Update}: \Comment{\small\textsc{Posteriori estimation}}
\li \; Build the pre-arrays ${\mathbb A}$, ${\mathbb D}_A$  in line~\ref{lti:ud:f:P} of Algorithm~1a; \label{diff:f:pre:array1}
\li \; Find the pre-arrays' derivatives $\Di{\mathbb A}$ and $\Di{{\mathbb D}_A}$;
\li \; Get $\left[{\mathbb R}, {\mathbb D}_R, \Di{{\mathbb R}}, \Di{{\mathbb D}_R} \right]\leftarrow$\verb"Diff. UD"(${\mathbb A}$, ${\mathbb D}_A$, $\Di{\mathbb A}$, $\Di{{\mathbb D}_A}$);
\li \; $\left\{ D_{R_{e,k}}, \Di{D_{R_{e,k}}} \right\}\leftarrow$ read-off from the ${\mathbb D}_R$ and $\Di{{\mathbb D}_R}$;
\li \; $\left\{ D_{P_{k|k}}, \Di{D_{P_{k|k}}} \right\}\leftarrow$ read-off from the ${\mathbb D}_R$ and $\Di{{\mathbb D}_R}$;
\li \; $\left\{ \bar U_{R_{e,k}}, \Di{\bar U_{R_{e,k}}} \right\}\leftarrow$ read-off from the ${\mathbb R}$ and $\Di{{\mathbb R}}$;
\li \; $\left\{ \bar U_{P_{k|k}}, \Di{\bar U_{P_{k|k}}} \right\}\leftarrow$ read-off from the ${\mathbb R}$ and $\Di{{\mathbb R}}$;
\li \; $\left\{ \bar K_{k}^u,  \Di{\bar K_{k}^u} \right\} \: \leftarrow$ read-off from the post-arrays ${\mathbb R}$ and $\Di{{\mathbb R}}$; \label{diff:f:pre:array2}
\li \; Find $e_{k}$ from line~\ref{lti:ud:f:ek} of Algorithm~1a and $\bar e_{k}=\bar U_{R_{e,k}}^{-1}e_{k}$; \label{eq:new:1}
\li \; Calculate $\Di{\bar e_{k}}  = -\bar U_{R_{e,k}}^{-1} \DDi{\bar U_{R_{e,k}}} \bar U_{R_{e,k}}^{-1} \bar e_{k}$
\zi \; $- \bar U_{R_{e,k}}^{-1}\left[\DDi{Z}\hat \alpha_{k|k-1}+Z\DDi{\hat \alpha_{k|k-1}}+\DDi{\beta}x_k\right]$; \label{diff:lti:f:ek}
\li \; Find {\it a posteriori} estimate $\hat \alpha_{k|k} = \hat \alpha_{k|k-1} + \bar K_{k}^u \bar e_k$;
\li \; Find $\Di{\hat \alpha_{k|k}}  = \Di{\hat \alpha_{k|k-1}}+\DDi{\bar K_{k}^u}\bar e_k + \bar K_{k}^u \DDi{\bar e_k}$. \label{diff:lti:f:X}
\end{codebox}
%\setlinenumberplus{pkf:ud:p:P}{1}

\begin{Prop} \label{proposition:3}
The filters' sensitivities of the UD-based LTI MIMO KF (Algorithm~1a) are calculated truly within Algorithm~1b.
\end{Prop}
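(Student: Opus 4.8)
The plan is to prove Proposition~\ref{proposition:3} by differentiating every equation of Algorithm~1a with respect to a single scalar parameter $\theta_i$ and verifying that the resulting \emph{filter sensitivity} equations coincide exactly with the quantities produced by Algorithm~1b. The backbone of the argument is the \textsc{Diff.\ UD} scheme, whose correctness is already guaranteed by~\cite[Lemma~1]{2013:Tsyganova:IEEE}: given a pre-array pair $\{{\mathbb A},{\mathbb D}_A\}$, its $\theta_i$-derivatives, and the stored transform $\mathfrak{W}$ together with the post-arrays ${\mathbb R}$, ${\mathbb D}_R$, it returns the true derivatives $\Di{{\mathbb R}}$ and $\Di{{\mathbb D}_R}$ of the MWGS factorization~\eqref{assume:1}. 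Consequently, the proof reduces to two routine verifications: (i) that the pre-arrays and their $\theta_i$-derivatives fed into \textsc{Diff.\ UD} at lines~\ref{diff:p:pre:array1} and~\ref{diff:f:pre:array1} are the correct ones, and (ii) that the few remaining non-factorization recursions---the state estimate and the normalized innovations---are differentiated by the standard product and matrix-inverse rules.

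For the time update, Proposition~\ref{proposition:1} shows that the array in line~\ref{lti:ud:p:P} realizes the factored Riccati step $P_{k|k-1}=\overline{T}P_{k-1|k-1}\overline{T}^\top+\overline{Q}$ through the pre-array ${\mathbb A}^\top=[\,\overline{T}\bar U_{P_{k-1|k-1}}\ \ \bar U_{\overline{Q}}\,]$ weighted by ${\mathbb D}_A=D_{P_{k-1|k-1}}\oplus D_{\overline{Q}}$. Differentiating this entrywise gives $\Di{{\mathbb A}}$ and $\Di{{\mathbb D}_A}$ in terms of $\Di{\overline{T}}$ and the factor sensitivities of $\overline{Q}$ (available from the initialization, lines~\ref{diff:lti:ud:initial1}--\ref{diff:lti:ud:initial2}) together with the previous-step sensitivities $\Di{\bar U_{P_{k-1|k-1}}}$, $\Di{D_{P_{k-1|k-1}}}$; feeding these to \textsc{Diff.\ UD} returns $\Di{\bar U_{P_{k|k-1}}}$, $\Di{D_{P_{k|k-1}}}$, which by~\cite[Lemma~1]{2013:Tsyganova:IEEE} are the true sensitivities. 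The derivative of the a~priori state in line~\ref{diff:lti:p:X} is then simply the product-rule differentiation of line~\ref{lti:ud:p:X}, where one uses $\Di{(SH^{-1})}=\DDi{S}H^{-1}-SH^{-1}\DDi{H}H^{-1}$ and the fact that the data $x_{k-1}$ and $y_{k-1}$ carry no $\theta$-dependence, so their differentials vanish.

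The measurement update is handled identically. By Proposition~\ref{proposition:1} the array in line~\ref{lti:ud:f:P} jointly factorizes $P_{k|k-1}$, $R_{e,k}$ and the ``normalized'' gain, so differentiating its pre-array and weight (with $\Di{\bar U_{P_{k|k-1}}}$, $\Di{D_{P_{k|k-1}}}$ now supplied by the time update and $\Di{\bar U_H}$, $\Di{D_H}$ by the initialization) and invoking \textsc{Diff.\ UD} yields the true derivatives of all read-off blocks $\bar U_{R_{e,k}}$, $D_{R_{e,k}}$, $\bar U_{P_{k|k}}$, $D_{P_{k|k}}$, $\bar K_k^u$ at lines~\ref{diff:f:pre:array1}--\ref{diff:f:pre:array2}. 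Finally, differentiating $\bar e_k=\bar U_{R_{e,k}}^{-1}e_k$ with $\Di{(\bar U_{R_{e,k}}^{-1})}=-\bar U_{R_{e,k}}^{-1}\DDi{\bar U_{R_{e,k}}}\bar U_{R_{e,k}}^{-1}$ and $\Di{e_k}=-\DDi{Z}\hat\alpha_{k|k-1}-Z\DDi{\hat\alpha_{k|k-1}}-\DDi{\beta}x_k$ reproduces line~\ref{diff:lti:f:ek}, while differentiating $\hat\alpha_{k|k}=\hat\alpha_{k|k-1}+\bar K_k^u\bar e_k$ reproduces line~\ref{diff:lti:f:X}. The full claim then follows by induction on $k$, since the sensitivities produced at each step feed the next step in the same form.

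The only genuinely nontrivial ingredient---and hence the step I expect to be the main obstacle---is the correctness of the \textsc{Diff.\ UD} scheme itself, namely that differentiating the MWGS identities ${\mathbb A}^\top={\mathbb R}\,\mathfrak{W}^\top$ and $\mathfrak{W}^\top{\mathbb D}_A\mathfrak{W}={\mathbb D}_R$ recovers $\Di{{\mathbb R}}$ and $\Di{{\mathbb D}_R}$ \emph{without} requiring the derivative $\Di{\mathfrak{W}}$ explicitly. This rests on the unit-diagonal structure of ${\mathbb R}$, which forces $\Di{{\mathbb R}}$ to have zero diagonal and thereby lets the products $M_0=\mathfrak{W}^\top{\mathbb D}_A\Di{{\mathbb A}}\,{\mathbb R}^{-\top}$ and $M_2=\mathfrak{W}^\top\Di{{\mathbb D}_A}\mathfrak{W}$ be split into strictly lower, diagonal and strictly upper triangular parts from which $\Di{{\mathbb R}}$ and $\Di{{\mathbb D}_R}$ are read off separately. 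Since this is precisely the content of~\cite[Lemma~1]{2013:Tsyganova:IEEE}, I would invoke it verbatim and confine the present proof to the entrywise product-rule differentiations described above, each of which is routine.
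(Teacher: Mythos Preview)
Your proposal is correct and follows essentially the same approach as the paper: both arguments invoke \cite[Lemma~1]{2013:Tsyganova:IEEE} to certify the \textsc{Diff.\ UD} scheme for the factored-array steps, and then dispatch the remaining state-estimate and innovation recursions (lines~\ref{diff:lti:p:X}, \ref{diff:lti:f:ek}, \ref{diff:lti:f:X}) by direct product-rule differentiation together with Jacobi's formula $\Di{(A^{-1})}=-A^{-1}\DDi{A}A^{-1}$ and the observation that $\Di{y_k}=\Di{x_k}=0$. Your write-up is in fact somewhat more explicit than the paper's---you spell out the induction on $k$ and the structure of the pre-array derivatives---but the logical content is the same.
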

\begin{proof}
Algorithm~1b computes simultaneously the filtering quantities and filter's sensitivities (with respect to the unknown system's parameter $\theta \in {\mathbb R}^p$). In other words, it incorporates the filtering equations, which have a convenient block structure suitable for the application of \cite[Lemma~1]{2013:Tsyganova:IEEE}. Therefore, the above pseudocode (\textsc{Diff. UD}) can be used in computations of the post-arrays and their derivatives. Thus, the last step left is to implement this pseudocode with the related pre-arrays and their derivatives, whose computation is straightforward since all such values are available in advance. As output parameters, this scheme returns the requested filtering quantities and their  sensitivities as well. Note that Lemma~1 and the corresponding pseudocode have been already proven and justified in the cited paper. All this verifies immediately the sensitivity calculation technique presented in lines~\ref{diff:p:pre:array1}--\ref{diff:p:pre:array2} and~\ref{diff:f:pre:array1}--\ref{diff:f:pre:array2} of Algorithm~1b.

Lastly, we need to prove the equations in lines~\ref{diff:lti:p:X}, \ref{diff:lti:f:ek} and~\ref{diff:lti:f:X} of this algorithm. With use of Jacobi's formula\footnote{$\Di{\left(A^{-1}\right)} = -A^{-1} \DDi{A} A^{-1}$ holds for any square nonsingular $A$.}, the direct differentiation of the equation in line~\ref{lti:ud:p:X} yields the state sensitivity formula
\begin{align*}
\Di{\hat \alpha_{k|k-1}} & = \Di{\left(\overline{T} \hat \alpha_{k-1|k-1}\right)} + \Di{\left(\overline{B}x_{k-1}\right)} + \Di{\left(SH^{-1}y_{k-1}\right)} \\
& = \DDi{\overline{T}}\hat \alpha_{k-1|k-1} + \overline{T}\DDi{\hat \alpha_{k-1|k-1}}+ \DDi{\overline{B}}x_{k-1}  \\
& + \DDi{S}H^{-1}y_{k-1} - SH^{-1}\DDi{H}H^{-1}y_{k-1},
\end{align*}
which is precisely that given in line~\ref{diff:lti:p:X} of Algorithm~1b. The above differentiation has taken into account that $\Di{y_k} = 0$ and $\Di{x_k} = 0$ because the observations $y_k$ and explanatory variables (or controllable inputs) $x_k$  do not depend on the parameters (i.e. their realizations are independent of variations in $\theta$). In the same way, the equations in lines~\ref{diff:lti:f:ek} and~\ref{diff:lti:f:X} of Algorithm~1b are justified by differentiating the formulas in lines~\ref{lti:ud:f:ek}, \ref{lti:ud:f:X} of the KF summarized in Algorithm~1a, i.e.
\begin{align*}
\Di{\bar e_k} & = \Di{\left(\bar U_{R_{e,k}}^{-1}e_k\right)} = -\bar U_{R_{e,k}}^{-1}\!\! \DDi{\bar U_{R_{e,k}}} \bar U_{R_{e,k}}^{-1} e_k + \bar U_{R_{e,k}}^{-1}\!\DDi{e_k}
\end{align*}
where $e_k = y_k-Z\hat \alpha_{k|k-1}-\beta x_k$ and, hence,
\begin{align*}
\Di{\bar e_k} & = -\bar U_{R_{e,k}}^{-1} \!\DDi{\bar U_{R_{e,k}}} \bar U_{R_{e,k}}^{-1} e_k \!+ \bar U_{R_{e,k}}^{-1}\!\Di{\!\left(y_k\!-\!Z\hat \alpha_{k|k-1}\!-\!\beta x_k\right)} \\
 & = -\bar U_{R_{e,k}}^{-1} \DDi{\bar U_{R_{e,k}}} \bar U_{R_{e,k}}^{-1} e_k \\
 & \phantom{=} + \bar U_{R_{e,k}}^{-1}\left[-\DDi{Z}\hat \alpha_{k|k-1}- Z\DDi{\hat \alpha_{k|k-1}}-\DDi{\beta} x_k\right].
\end{align*}
This proves line~\ref{diff:lti:f:ek} in our Algorithm~1b. Further, the equation in line~\ref{lti:ud:f:X} of Algorithm~1a is differentiated to the form
\begin{align*}
\Di{\hat \alpha_{k|k}}  & =  \Di{\hat \alpha_{k|k-1}}+\DDi{\bar K_{k}^u}\bar e_k + \bar K_{k}^u \DDi{\bar e_k},
\end{align*}
which validates line~\ref{diff:lti:f:X} in Algorithm~1b and completes our proof.
\end{proof}

Next, we derive a similar result for the UD-based PKF technique.
\begin{codebox}
\Procname{{\bf Algorithm~2b}. $\proc{Differentiated UD-based PKF}$}
\zi \textsc{Initialization:} ($k=0$, $i=\overline{1,p}$)
\li \; Repeat initialization of Algorithm~2a in lines~1--4;
\li \; Compute $\Di{\overline{T}}$, $\Di{\overline{B}}$, $\Di{\overline{Q}}$, $\Di{\bar U_{\Pi_0}}$, $\Di{D_{\Pi_0}}$, $\Di{\bar \alpha_0}$;
\li \; Set $\Di{\bar U_{P_{0|0}}}\!\! = \Di{\bar U_{\Pi_0}}$, $\Di{D_{P_{0|0}}}\!\! = \Di{D_{\Pi_0}}$, $\Di{\hat \alpha_{0|0}} = \Di{\bar \alpha_0}$;
\zi \textsc{Time Update}: ($k=\overline{0,N-1}$, $i=\overline{1,p}$) \Comment{\small\textsc{Priori estimation}}
\li \; Find {\it a priori} estimate $\hat \alpha_{k+1|k}$ from line~\ref{pkf:ud:p:X} of Algorithm~2a;
\li \; Compute $\Di{\hat \alpha_{k+1|k}} = \DDi{\overline{T}}\hat \alpha_{k|k} + \overline{T}\DDi{\hat \alpha_{k|k}}$
\zi \; $+\DDi{\overline{B}} y_{k-1} + \DDi{S }H^{-1}y_{k} - S H^{-1}\DDi{H}H^{-1}y_{k}$;
\li \; Build the pre-arrays ${\mathbb A}$, ${\mathbb D}_A$ in line~\ref{pkf:ud:p:P} of Algorithm~2a;
\li \; Find the pre-arrays' derivatives $\Di{\mathbb A}$ and $\Di{{\mathbb D}_A}$;
\li \; Get $\left[{\mathbb R}, {\mathbb D}_R, \Di{{\mathbb R}}, \Di{{\mathbb D}_R} \right]\leftarrow$\verb"Diff. UD"(${\mathbb A}$, ${\mathbb D}_A$, $\Di{\mathbb A}$, $\Di{{\mathbb D}_A}$);
\li \; $\left\{ D_{P_{k+1|k}}, \: \Di{D_{P_{k+1|k}}} \right\} \leftarrow$ read-off from the ${\mathbb D}_R$ and $\Di{{\mathbb D}_R}$;
\li \; $\left\{\bar U_{P_{k+1|k}}, \: \Di{\bar U_{P_{k+1|k}}} \right\}\leftarrow$ read-off from the ${\mathbb R}$ and $\Di{{\mathbb R}}$;
\zi \textsc{Measurement Update}: \Comment{\small\textsc{Posteriori estimation}}
\li \; Build the pre-arrays ${\mathbb A}$, ${\mathbb D}_A$  in line~\ref{pkf:ud:f:P} of Algorithm~2a;
\li \; Find the pre-arrays' derivatives $\Di{\mathbb A}$ and $\Di{{\mathbb D}_A}$; \label{start:eq:new}
\li \; Get $\left[{\mathbb R}, {\mathbb D}_R, \Di{{\mathbb R}}, \Di{{\mathbb D}_R} \right]\leftarrow$\verb"Diff. UD"(${\mathbb A}$, ${\mathbb D}_A$, $\Di{\mathbb A}$, $\Di{{\mathbb D}_A}$);
\li \; $\left\{ D_{R_{e,k+1}}, \Di{D_{R_{e,k+1}}} \right\}\leftarrow$ read-off from the ${\mathbb D}_R$ and $\Di{{\mathbb D}_R}$;
\li \; $\left\{ D_{P_{k+1|k+1}}, \Di{D_{P_{k+1|k+1}}} \right\}\leftarrow$ read-off from the ${\mathbb D}_R$, $\Di{{\mathbb D}_R}$;
\li \; $\left\{ \bar U_{R_{e,k+1}}, \Di{\bar U_{R_{e,k+1}}} \right\}\leftarrow$ read-off from the ${\mathbb R}$, $\Di{{\mathbb R}}$; \label{diff:pkf:start}
\end{codebox}
\begin{codebox}
\setlinenumberplus{diff:pkf:start}{1}
\li \; $\left\{ \bar U_{P_{k+1|k+1}}, \Di{\bar U_{P_{k+1|k+1}}} \right\}\leftarrow$ read-off from the ${\mathbb R}$, $\Di{{\mathbb R}}$;
\li \; $\left\{ \bar K_{k+1}^u,  \Di{\bar K_{k+1}^u} \right\} \: \leftarrow$ read-off from the ${\mathbb R}$ and $\Di{{\mathbb R}}$;
\li \; Find $e_{k+1}$ from line~\ref{pkf:ud:f:ek} of Algorithm~2a, $\bar e_{k+1}=\bar U_{R_{e,k+1}}^{-1}e_{k+1}$;
\li \; Calculate $\Di{\bar e_{k+1}}  = -\bar U_{R_{e,k+1}}^{-1} \DDi{\bar U_{R_{e,k+1}}} \bar U_{R_{e,k+1}}^{-1} \bar e_{k+1}$
\zi \; $- \bar U_{R_{e,k+1}}^{-1}\left[\DDi{Z}\hat \alpha_{k+1|k}+Z\DDi{\hat \alpha_{k+1|k}}+\DDi{\beta}y_{k}\right]$;
\li \; Find {\it a posteriori} estimate $\hat \alpha_{k+1|k+1} = \alpha_{k+1|k}+\bar K_{k+1}^u \bar e_{k+1}$;
\li \; $\Di{\hat \alpha_{k+1|k+1}}  = \Di{\hat \alpha_{k+1|k}}+\DDi{\bar K_{k+1}^u}\bar e_{k+1} + \bar K_{k+1}^u \DDi{\bar e_{k+1}}$.
\end{codebox}

\begin{Prop} \label{proposition:4}
The filters' sensitivities of the UD-based PKF equations (Algorithm~2a) are calculated truly within Algorithm~2b.
\end{Prop}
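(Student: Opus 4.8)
The plan is to follow the same two-stage strategy used in the proof of Proposition~\ref{proposition:3}, exploiting the fact that Algorithm~2b has an identical block-array architecture to Algorithm~1b and differs only in that the explanatory input $x_k$ is everywhere replaced by the lagged observations $y_{k-1}$ and $y_k$. First I would note that the MWGS pre-arrays assembled in lines~\ref{pkf:ud:p:P} and~\ref{pkf:ud:f:P} of Algorithm~2a carry exactly the block (upper-)triangular layout demanded by \cite[Lemma~1]{2013:Tsyganova:IEEE}; hence the differentiated UD scheme \textsc{Diff. UD} applies without modification. Passing to it the pre-arrays ${\mathbb A}$, ${\mathbb D}_A$ together with their sensitivities $\Di{{\mathbb A}}$, $\Di{{\mathbb D}_A}$ returns the post-arrays ${\mathbb R}$, ${\mathbb D}_R$ and the derivatives $\Di{{\mathbb R}}$, $\Di{{\mathbb D}_R}$, from which every factored quantity $\bar U_{P_{k+1|k}}$, $D_{P_{k+1|k}}$, $\bar U_{R_{e,k+1}}$, $D_{R_{e,k+1}}$, $\bar U_{P_{k+1|k+1}}$, $D_{P_{k+1|k+1}}$, $\bar K_{k+1}^u$ and its $\theta$-derivative is read off directly. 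Since Lemma~1 and the \textsc{Diff. UD} pseudocode have already been established in the cited paper, this validates in one stroke the array-based sensitivity computations in both the time- and measurement-update stages of Algorithm~2b.

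It then remains to justify the vector-type recursions that lie outside the array machinery: the derivatives of the \emph{a priori} estimate, the normalized innovations $\bar e_{k+1}$, and the \emph{a posteriori} estimate. I would obtain each by direct differentiation of the corresponding PKF formula in Algorithm~2a, invoking Jacobi's formula for every matrix inverse. Differentiating line~\ref{pkf:ud:p:X} and using $\Di{y_{k-1}}=\Di{y_k}=0$ gives
\[
\Di{\hat \alpha_{k+1|k}} = \DDi{\overline{T}}\hat \alpha_{k|k} + \overline{T}\DDi{\hat \alpha_{k|k}} + \DDi{\overline{B}}y_{k-1} + \DDi{S}H^{-1}y_{k} - SH^{-1}\DDi{H}H^{-1}y_{k},
\]
which is precisely the time-update sensitivity of Algorithm~2b. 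Likewise, differentiating $e_{k+1}$ in line~\ref{pkf:ud:f:ek}, together with $\bar e_{k+1}=\bar U_{R_{e,k+1}}^{-1}e_{k+1}$ and $\Di{y_{k+1}}=\Di{y_k}=0$, reproduces the stated formula for $\Di{\bar e_{k+1}}$, and differentiating line~\ref{pkf:ud:f:X} yields the product-rule expression for $\Di{\hat \alpha_{k+1|k+1}}$.

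The argument carries essentially no genuine obstacle, so the hard part is merely one of bookkeeping: I must confirm that the PKF pre-arrays retain exactly the triangular block structure that Lemma~1 requires (so that \textsc{Diff. UD} is legitimately invoked), and I must verify carefully that all terms carrying the lagged observations $y_{k-1}$, $y_k$, $y_{k+1}$ vanish under $\partial_i$. The latter is the one conceptual point worth stressing in the PMM setting: although $y_{k-1}$ now enters the state dynamics as a regressor, each $y_k$ is a fixed data realization and therefore independent of variations in $\theta$, so its differential is zero exactly as for the control input $x_k$ in Algorithm~1b. With these two checks in place the remaining manipulations are identical to those in Proposition~\ref{proposition:3}, and the proof is complete.
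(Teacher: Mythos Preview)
Your proposal is correct and follows essentially the same approach as the paper: the paper's own proof simply states that one extends the methodology of Proposition~\ref{proposition:3} to the UD-based PKF equations, and your write-up spells out exactly that extension in detail. The two points you single out for verification---applicability of \cite[Lemma~1]{2013:Tsyganova:IEEE} to the PKF pre-arrays and the vanishing of $\Di{y_k}$ despite $y_{k-1}$ appearing as a regressor---are precisely the bookkeeping checks the paper's one-line proof implicitly relies on.
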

\begin{proof} Having extended the methodology utilized in the proof of Proposition~\ref{proposition:3} to the UD-based PKF equations (Algorithm~2a), one derives easily the formulas summarized in Algorithm~2b.
\end{proof}

\section{Numerical experiments} \label{numerical:experiments}

For a practical examination of the new UD-based adaptive schemes, we consider several state estimation scenarios from a rather well-conditioned stochastic system to its moderately and strongly ill-conditioned counterparts. Here, we focus on testing the LTI MIMO gradient-based estimator in Algorithm~1b, but the PKF estimator in Algorithm~2b allows similar results and conclusions to be observed.

\begin{exmp}\label{ex:2}
Consider the LTI MIMO specification in~\eqref{eq:lti:model} with
\begin{align*}
T & =
\begin{bmatrix}
1 & 1 & 0.5 &  0.5 \\
0 & 1 & 1 & 1 \\
0 & 0 & 1 & 0 \\
0 & 0 & 0 & 0.606
\end{bmatrix},
 & Z & =
\begin{bmatrix}
1 & 1 & 1 & 1\\
1 & 1 & 1 & 1+\delta
\end{bmatrix}
\end{align*}
and $Q = {\rm diag}\{0,0,0, 0.63 \cdot 10^{-2}\}$, $H  = \theta^2\delta^2 I_2$, $B=0$, $\beta=0$, $S=0$. The initial values are: $\bar \alpha_0 =0$ and $\Pi_0 =  \theta^2 I_4$. Here, $I_2$ and $I_4$ stand for the identity matrices of sizes~$2$ and~$4$, respectively. The ill-conditioning parameter $\delta$ is used for provoking numerical instabilities and boosting the roundoff error. Then, the system's parameter $\theta$ is to be estimated together with the dynamic state $\alpha_k$ of this model.
\end{exmp}

\begin{table*}
\renewcommand{\arraystretch}{1.3}
{\scriptsize
\caption{The performance results of the adaptive schemes for the predefined parameter value $\theta^* =3$ in Example~1} \label{MC-estimators}
\centering
\begin{tabular}{c||c|c|c||c|c|c||c|c|c||c|c|c}
\hline
&  \multicolumn{6}{c||}{\bf Conventional parameter estimator implementations} &  \multicolumn{6}{c}{\bf $UDU^\top$-factorization-based parameter estimator implementations} \\
\cline{2-13}
&  \multicolumn{3}{c||}{\bf Numerically approximated score} &
\multicolumn{3}{c||}{\bf Analytically computed score} & \multicolumn{3}{c||}{\bf Numerically approximated score} & \multicolumn{3}{c}{\bf Analytically computed score}\\
\cline{2-13}
$\delta$ &  Mean &  RMSE, MAPE & Time &  Mean & RMSE, MAPE & Time\,(Benefit) &  Mean & RMSE, MAPE & Time &  Mean & RMSE, MAPE & Time\,(Benefit) \\
\hline
\hline
$ 10^{\pm 0\phantom{0}}$ & 2.96 &  0.13, 4.18\% & 1.19 & 2.96 & 0.13, 4.18\% & 1.16 (2.5\%) & 2.96 & 0.13, 4.18\% & 2.66 & 2.96 & 0.13, 4.18\% & 2.54 (4.7\%) \\
$ 10^{-1\phantom{0}}$ & 2.94 &  0.14, 4.03\% & 1.23 & 2.94 & 0.14, 4.03\% & 1.20 (2.5\%) & 2.94 & 0.14, 4.03\% & 2.49 & 2.94 & 0.14, 4.03\% & 2.41 (3.3\%) \\
\hline
$ 10^{-2\phantom{0}}$ & 2.97 & 0.16, 4.53\% & 2.83 & 2.97 & 0.16, 4.53\%  & 1.15 (146\%) & 2.97 & 0.16, 4.53\% & 2.76 & 2.97 & 0.16, 4.53\% & 2.58 (6.9\%) \\
$ 10^{-3\phantom{0}}$ & 2.98 & 0.11, 3.23\%  & 2.94 & 3.02 &  0.14, 4.34\% & 1.46 (100\%) & 3.02 & 0.14, 4.30\% & 2.67 & 3.02 & 0.14, 4.30\% & 2.51 (6.4\%) \\
$ 10^{-4\phantom{0}}$ & 2.50 & 0.71, 21.0\%  & 3.05 & 4.31 &  2.08, 43.8\% & 8.80 (-65\%) & 2.94 & 0.20, 6.06\% & 2.84 & 2.94 & 0.20, 6.06\% & 2.59 (9.4\%) \\
$ 10^{-5\phantom{0}}$ & 2.33 & 1.06, 27.1\%  & 3.41 & 7.77 &  6.01, 185\% & 0.77 (342\%) & 2.96 & 0.23, 7.19\% & 3.78 & 2.96 & 0.23, 7.19\% & 2.44 (54\%) \\
\hline
$ 10^{-6\phantom{0}}$ & 1.00 & 2.00, 66.6\%  & 2.57 & 5.52 & 6.33, 187\% & 1.06 (141\%) & 2.75 & 0.30, 8.56\% & 6.64 & 2.75 & 0.30, 8.56\% & 2.65 (149\%) \\
$ 10^{-7\phantom{0}}$ & 1.00 & 2.00, 66.6\%  & 2.22 & 1.00 & 2.00, 66.6\% & 0.67 (227\%) & 2.91 & 0.25, 7.00\% & 7.08 & 2.90 & 0.25, 7.06\% & 2.44 (189\%) \\
$ 10^{-8\phantom{0}}$ & - & - & - & - & - & - & 2.99 & 0.20, 5.82\% & 6.31 & 3.00 & 0.19, 5.48\% & 2.52 (150\%) \\
$ 10^{-9\phantom{0}}$ & - & - & - & - & - & - & 2.78 & 0.51, 13.56\% & 6.78 & 2.85 & 0.24, 7.04\% & 2.85 (137\%) \\
$ 10^{-10\phantom{0}}$ & - & - & - & - & - & - & 2.06 & 1.03, 31.29\% & 6.20 & 2.94 & 0.30, 9.11\% & 4.78 (29\%) \\
$ 10^{-11\phantom{0}}$ & - & - & - & - & - & - & 1.58 & 1.48, 47.19\% & 8.28 & 2.93 & 0.27, 7.35\% & 6.90 (20\%) \\
$ 10^{-12\phantom{0}}$ & - & - & - & - & - & - & 1.01 & 1.98, 66.02\% & 6.05 & 3.34 & 0.45, 11.54\% & 28.58 (-78\%) \\
\hline
\end{tabular}
}
\end{table*}

The adaptive filtering schemes listed below are applied for the estimation of the predefined (i.e. ``true'') parameter value $\theta^* = 3$ from $N=100$ simulated measurements. For a fair comparison, each adaptive strategy utilizes precisely the same sampled data $\{ y_k\}$, the same initial value $\hat \theta^{(0)} = 1$ set for the optimization method implemented and the same optimization code, which is taken to be the MATLAB built-in function \verb"fmincon".
In Example~\ref{ex:2}, we examine the adaptive filtering strategies with the numerically approximated gradient (i.e. with the optimizer's option 'GradObj' set to 'off') and with the analytically computed score (where the optimizer's option 'GradObj' is 'on'). The estimators in use are the following ones:
\begin{itemize}
\item the conventional LTI MIMO computing the log LF by Algorithm~1 and \verb"fmincon" with the gradient found numerically;
\item the conventional LTI MIMO computing the log LF by Algorithm~1 and \verb"fmincon" with the gradient found analytically by differentiation of the algorithm's equations (see details in~\cite{2008:Wills});
\item the novel UD-based LTI MIMO computing the log LF by Algorithm~1a and \verb"fmincon" with the gradient found numerically;
\item the novel UD-based LTI MIMO computing the log LF and the analytical gradient utilized in \verb"fmincon" by Algorithm~1b.
\end{itemize}

Our numerical experiments are fulfilled for estimating the parameter $\hat \theta^*$ within $M=100$ Monte Carlo runs, which allow  {\it a posterior} mean of derived parameter estimates, the root mean squared error (RMSE) and the mean absolute percentage error (MAPE) to be evaluated. For each adaptive scheme, we also assess the average CPU time (in sec.) and calculate the computational benefit (in $\%$) of using the analytically computed log LF gradient instead of its numerical approximation in the optimization method. For detecting the most reliable technique among the listed ones, we repeat our numerical experiments for various $\delta$'s such that this ill-conditioning parameter tends to the machine precision limit of MATLAB, $\delta \to \epsilon_{roundoff}$. We intend for assessing degradations of the methods due to roundoff, which stems from the inversion of increasingly ill-conditioned matrix
$R_{e,k}$; see the third reason of the KF ill-conditioning in~\cite[p.~288]{2015:Grewal:book}.

{\it The well-conditioned scenario}. Here, we assess the performance of the listed parameter estimators in the situation when the value of $\delta$ is reasonably large. Having analyzed the results collected in the first and second rows of Table~1, we see that all the adaptive schemes under examination solve equally well Example~\ref{ex:2}, i.e. with the same accuracies. Besides, the computed RMSE and MAPE are small, i.e. these adaptive techniques maintain high estimation quality. In other words, accuracies achieved by the examined strategies are high and insensitive to the covariance matrix factorization and the way of the score calculation. In addition, the numerical results presented substantiate our theoretical derivations in Sec.~\ref{main:result} and justify the mathematical equivalence of the conventional approach to the filter derivative computation and our novel UD-based ``differentiated'' algorithms. Also, the strategy with the analytically computed gradient reduces the execution time of the entire adaptive filtering schemes. This conclusion holds for the conventional parameter estimators and their UD-based versions as well. The CPU benefit of using the analytically computed score instead of its numerical approximation is about $3-4\%$ on average, but this run time reduction will change dramatically when the problem becomes increasingly ill-conditioned.

{\it The moderately ill-conditioned scenario}. Increasing the model's ill-conditioning up to $\delta = 10^{-5}$ facilitates our assessment of all the parameter estimators under the mildly ill-conditioning case. Within this scenario, we observe a more significant CPU time profit for the methods with the analytically computed score. At $\delta = 10^{-2}$ and $\delta = 10^{-3}$, all the adaptive schemes estimate similarly the unknown parameter $\theta$. Indeed, their estimation errors are small and roughly the same regardless of using or not the $UDU^\top$ covariance factorization. However, we see that, for providing an accurate parameter estimation, the optimizer grounded in the conventional implementation and with the numerically approximated score (these data are presented in the first panel of Table~1) requires about twice of the CPU time utilized by the conventional implementation with the analytically computed score (these data are exposed in the second panel of Table~1). Meanwhile, the UD-based strategies are equally accurate and have almost the same computational cost, as shown in our third and fourth panels.

Next, for $\delta \le 10^{-4}$, the roundoff errors can affect the estimators' performance, considerably. It is clearly seen that the proposed UD-based adaptive schemes are robust with respect to roundoff errors regardless of the score calculation way utilized. Having analyzed the outcomes presented for $\delta = 10^{-4}$ and $\delta = 10^{-5}$ in the last two panels of Table~1, we observe the same values of the system's parameter $\hat \theta$ estimated by our UD-based methods with the equally small RMSE and MAPE committed. However, at $\delta = 10^{-5}$, the UD-based optimization scheme with the analytically computed log LF gradient is about $1.5$ times faster than that utilized the gradient approximated numerically. Although the execution time has been increased, our novel UD-based adaptive strategies maintain the high estimation quality and supply users with the reliable parameter identification means suitable for practical use. Meanwhile, the results derived by the conventional implementations are not so optimistic; see the first and second panels of Table~1. In fact, we observe even the divergence of non-factorized adaptive schemes regardless of the way of the score computation implemented. Note that their estimation errors become large with the outcome MAPEs being $\approx 20\%$ and $\approx 40\%$  for the numerically and analytically calculated scores at $\delta = 10^{-4}$, respectively. These are clear signs of the poor parameter estimation fulfilled. At $\delta = 10^{-5}$, the situation is further worsen. The conventional approach with the analytically computed gradient is not capable of solving the stated parameter estimation problem because its committed MAPE exceeds $100\%$, that illustrates the failure of this adaptive scheme. Due to the roundoff errors, the filter itself and its ``differentiated'' part provide highly inaccurate values of the log LF and its gradient. Then, these poor values are utilized by the optimizer and, eventually, contribute to the poor parameter estimates calculated. The conventional approach with the numerically approximated score works better in this situation because the log LF is the unique source of the incorrect information, here. However, its MAPE is about $27\%$, that also implies the poor performance of this adaptive technique.

{\it The strongly ill-conditioned scenario}.  When the ill-conditioning parameter $\delta$ becomes extremely small, we will observe the ultimate failure of the classical adaptive filtering schemes. Indeed, for $\delta = 10^{-6}$ and $\delta = 10^{-7}$, the data of the first two panels in Table~1 expose the extremely poor performance. Sometimes, these even return the initial value $\theta^{(0)}=1$ as their output. Furthermore, at $\delta\ge 10^{-8}$, the roundoff errors destroy fully the standard parameter estimators regardless of the way for calculating the score. In Table~1, we use the dash ``-'' for indicating the situation when the estimator either cannot be run or these are suddenly interrupted and terminated. Meanwhile, our novel UD-based schemes work well even under this extreme ill-conditioning. In fact, the new methods produce the accurate estimates with their MAPEs about $6-7\%$ for any $\delta\le 10^{-9}$. Nevertheless, the version with the analytically computed score demands less CPU time in comparison to that computing this score, numerically. In addition, our further decrease of the parameter $\delta$ reduces essentially the performance of the UD-based estimator with the numerically approximated score; see the third panel in Table~1. Indeed, at $\delta = 10^{-10}$, its MAPE becomes about $30\%$ compared to $9\%$ exhibited by the UD-based technique using the newly-proposed analytic way for computing the score. Having completed our numerical experiments at all $\delta\le 10^{-12}$, we see that the UD-based method with the numerically approximated score fails and produces the unacceptable results in the last scenario. The third panel in Table~1 shows that this version is fast, but extremely poor. At the same  $\delta = 10^{-12}$, the UD-based estimator with the analytically computed score produces the rather accurate estimate with its MAPE about $11\%$, but it requires a significant computational effort for reaching this result because the log LF and its gradient are both affected severely by the roundoff errors involved.

In conclusion, we see that our novel UD-based adaptive filtering schemes are mathematically equivalent to their classical counterparts, but these are more robust to roundoff errors and provide high quality parameter estimations even in strongly ill-conditioned scenarios. All this makes them the methods of choice in parameter estimation tasks.

\subsection{Real data application} \label{empirical:results}

The proposed efficient and reliable gradient-based UD-factorized adaptive filtering strategy may be appealing to practitioners working in econometrics. In particular, the extra assumption of time-varying coefficients underlying many classical econometrics models yields often state-space model structures of the form~\eqref{eq:lti:model} or~\eqref{eq:pkf:model}. In Sec.~\ref{empirical:results}, we explore the so-called {\em test for evolving efficiency} (TEE) that is, in fact, the classical GARCH-in-Mean(1,1) model with time-varying regression coefficients. Such models are used for recovering a weak-form market efficiency process, as suggested in~\cite{1999:Zalewska}.

The notion of market efficiency was introduced in the late 1960s with the efficient financial market defined as follows~\cite{1970:Fama}: ``as one in which security prices at any time always fully reflect available information'', i.e. there is no predictable profit opportunity based on the past movement of asset prices.
The foundational assumption of this market efficiency justifies the use of Markov processes in financial industry. It also underlies the most fundamental models in finance and econometrics~\cite{1972:Black}. A number of statistical tests have been derived for a weak-form market efficiency estimation. It is often examined in the absolute sense where the market efficiency is assumed to be unchanged throughout the examination period. Since the end of 1990s, the so-called {\it adaptive} market hypothesis gains popularity claiming that this property is not stable over time~\cite{2004:Lo}. It is particularly true for young (``infant'') markets, but it holds for emerging markets as well where some anomalies in the efficiency process might happen.
Below, we consider the TEE developed in~\cite{1999:Zalewska}, which is widely used nowadays.
The cited model is given as follows:
\begin{align}
y_k           & =  \mu_k + \varepsilon_k,   \quad  \varepsilon_k \sim {\cal N}(0, h_k), \label{tee:yk} \\
\mu_k & = \beta_{0,k} + \beta_{1,k} y_{k-1} + \delta h_k, \label{tee:mean} \\
h_k  & =  a_0 + a_1 (y_{k-1}-\mu_{k-1})^2 + b_1 h_{k-1}, \label{garchm:hk} \\
\beta_{i,k}   & = \beta_{i,k-1} + \eta_{i,k-1}, \: (i=0, 1) \quad  \eta_{i,k} \sim {\cal N}(0, \sigma_{i}^2) \label{tee:beta}
\end{align}
where $a_0 >0$, $a_1 \ge 0$, $b_1 \ge 0$ and $a_1 + b_1 <1$ ensures that the GARCH(1,1) process in equation~\eqref{garchm:hk} is stationary. The coefficient $\delta$ in the mean equation~\eqref{tee:mean} measures the volatility-in-mean effect~\cite{2009:Koopman}. The sequence $\{y_k\}_{k=0}^{N}$ contains the log-returns of asset prices and the process $\{h_k\}_{k=0}^{N}$ is the unknown variance process, which needs to be estimated. Following~\cite{1999:Zalewska}, the {\it evolution} of the slope coefficient $\beta_{1,k}$ in equation~\eqref{tee:mean} reflects changes in the weak-form market efficiency. It is also applied for detecting time periods where the market is weak-form efficient, which are identified by the condition $\beta_{1,k} = 0$.

The TEE methodology examines the EURO STOXX 50 index, which covers the largest and most liquid $50$ stocks in $18$ European countries. The time period to be analyzed is from 1 February, 1998 till 30 December, 2017\footnote{The data set can be freely downloaded at https://finance.yahoo.com.}.
For estimating the dynamic state $\alpha_k = [h_{k},\beta_{0,k},\beta_{1,k}]^\top$ together with the unknown system's parameters $\theta = [a_0,a_1,b_1,\delta,\sigma_0,\sigma_1]$ from the observed return series $y_k = 100 (\ln S_k - \ln S_{k-1})$ where $\{ S_k \}$ are the closing prices, the adaptive filtering technique is to be applied. Again, we use the MATLAB built-in function \verb"fmincon" as our optimization routine with Algorithm~1b  implemented for computing the log LF and its gradient as well, which are then passed to the optimizer. Algorithm~1b will be trivially accommodated to model (\ref{tee:yk})--(\ref{tee:beta}) if it is casted into the LTI MIMO state-space form~\eqref{eq:lti:model} with $T := {\rm diag}\{b_1,1,1\}$, $Z := [\delta, 1, y_{k-1}]$, $B:=[a_0,a_1;0,0;0,0]$, $\beta:=0$ and with the noise covariances $Q := {\rm diag} \{0,\sigma_{0}^2, \sigma_{1}^2 \} $ and $H := \hat h_{k|k-1}$, respectively. Following~\cite{1991:Hall:note,1991:Hall}, the  explanatory variables are: $x_k := [1, \hat \varepsilon_{k}^2]^\top$ where the residual is defined by $\hat \varepsilon_{k} := y_k- \hat \beta_{0,k|k-1} - \hat \beta_{1,k|k-1} y_{k-1} - \delta \hat h_{k|k-1}$ at time $t_k$ and $\alpha_{k|k-1} := [h_{k|k-1},\beta_{0,k|k-1},\beta_{1,k|k-1}]^\top$, which is the {\it a priori} state estimate obtained by the underlying filtering method.

\begin{figure}
\begin{center}
\includegraphics[width=0.5\textwidth]{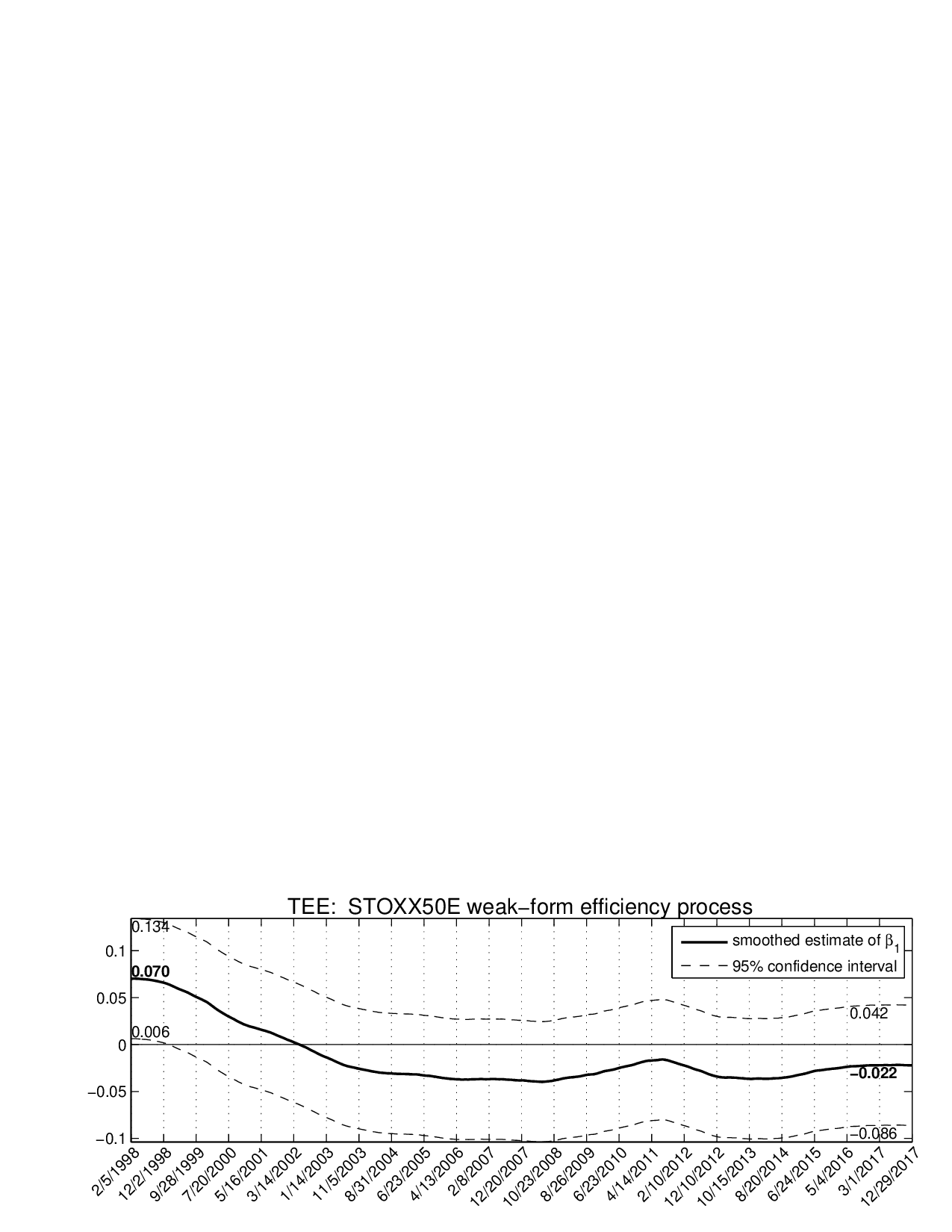}  \\  % The printed column width is 8.4 cm.
\caption{The test for evolving efficiency of the EURO STOXX 50 index.}
\label{fig:1}
\end{center}
\end{figure}

Fig.~\ref{fig:1} illustrates the TEE outcome in the EURO-STOXX-50-index-based scenario. We observe the weak-form efficiency (with $95$\% confidence level) for the entire period of the study, except the short period in 1998 where the lower bound of the confidence interval just touches the critical zero value. Indeed, the parameter $\beta_{1,k}$ estimated is $0.07$ with the lower bound of the constructed $95$\% confidence interval being $0.006$ in the beginning of 1998. However, this market shows the quick tendency to the weak-form efficiency, which is achieved with the $95$\% confidence level since December of 1998. Then, it remains efficient until the end of the examination time in December of 2017. In addition, it is important to remark that the efficiency is not a constant value in the EURO-STOXX-50-index-based scenario under consideration. Eventually, we observe its significant changes, which also support and validate the above adaptive market hypothesis.

\section{Concluding remarks} \label{conclusion}

In this paper, the gradient-based adaptive strategy for simultaneous hidden dynamic state and system's parameter estimations is derived within general state-space models structures. These methods are developed in terms of the $UDU^\top$ matrix decomposition. Out novel UD-based technique with the analytic filter's sensitivity evaluation is computationally efficient and reliable for treating system identification tasks, including ill-conditioned scenarios.
The proposed algorithms require the calculation of the filter derivatives, only. Alternatively, one has to utilize the forward (filtering) and backward (smoothing) pass~\cite{1992:Koopman}. However, it is presently unknown how to implement the $UDU^\top$-factorization-based strategy for calculating derivatives of the filter equations and of the smoother ones as well. This seems to be an interesting topic of a future research.

\bibliographystyle{IEEEtran}
%\bibliography{BibTex_Library/Books,%

\begin{thebibliography}{}
\providecommand{\url}[1]{#1}
\csname url@rmstyle\endcsname
\providecommand{\newblock}{\relax}
\providecommand{\bibinfo}[2]{#2}
\providecommand\BIBentrySTDinterwordspacing{\spaceskip=0pt\relax}
\providecommand\BIBentryALTinterwordstretchfactor{4}
\providecommand\BIBentryALTinterwordspacing{\spaceskip=\fontdimen2\font plus
\BIBentryALTinterwordstretchfactor\fontdimen3\font minus
  \fontdimen4\font\relax}
\providecommand\BIBforeignlanguage[2]{{%
\expandafter\ifx\csname l@#1\endcsname\relax
\typeout{** WARNING: IEEEtran.bst: No hyphenation pattern has been}%
\typeout{** loaded for the language `#1'. Using the pattern for}%
\typeout{** the default language instead.}%
\else
\language=\csname l@#1\endcsname
\fi
#2}}

\end{thebibliography}


\begin{thebibliography}{10}
\providecommand{\url}[1]{#1}
\csname url@rmstyle\endcsname
\providecommand{\newblock}{\relax}
\providecommand{\bibinfo}[2]{#2}
\providecommand\BIBentrySTDinterwordspacing{\spaceskip=0pt\relax}
\providecommand\BIBentryALTinterwordstretchfactor{4}
\providecommand\BIBentryALTinterwordspacing{\spaceskip=\fontdimen2\font plus
\BIBentryALTinterwordstretchfactor\fontdimen3\font minus
  \fontdimen4\font\relax}
\providecommand\BIBforeignlanguage[2]{{%
\expandafter\ifx\csname l@#1\endcsname\relax
\typeout{** WARNING: IEEEtran.bst: No hyphenation pattern has been}%
\typeout{** loaded for the language `#1'. Using the pattern for}%
\typeout{** the default language instead.}%
\else
\language=\csname l@#1\endcsname
\fi
#2}}

\bibitem{1989:Harvey:book}
A.~C. Harvey, \emph{{Forecasting Structural Time Series Models and the Kalman
  Filter}}.\hskip 1em plus 0.5em minus 0.4em\relax Cambridge: Cambridge
  University Press, 1989.

\bibitem{2012:Durbin:book}
J.~Durbin and S.~J. Koopman, \emph{{Time Series Analysis by State Space
  Methods}}.\hskip 1em plus 0.5em minus 0.4em\relax Oxford: Oxford University
  Press, 2012.

\bibitem{2017:Wilcox}
B.~A. Wilcox and F.~Hamano, ``{Kalman's expanding influence in the econometrics
  discipline},'' \emph{IFAC-PapersOnLine}, vol.~50, no.~1, pp. 637--644, 2017.

\bibitem{1973:Sarris}
A.~H. Sarris, ``{A Bayesian approach to estimation of time-varying regression
  coefficients},'' \emph{Annals of Economic and Social Measurement}, vol.~2,
  no.~4, pp. 501--523, 1973.

\bibitem{1973:Rosenberg}
B.~Rosenberg, ``{A survey of stochastic parameter regression},'' \emph{Annals
  of Economic and Social Measurement}, vol.~2, no.~4, pp. 381--397, 1973.

\bibitem{1991:Hall:note}
S.~G. Hall, ``{A note on the estimation of GARCH-M models using the Kalman
  filter},'' \emph{Bank of England Discussion Paper}, no.~32, 1990.

\bibitem{1991:Hall}
------, ``{An application of the stochastic GARCH-in-Mean model to risk premia
  in the London Metal Exchange},'' \emph{The Manchester School}, vol.~59,
  no.~S1, pp. 57--71, 1991.

\bibitem{1994:Ruiz}
E.~Ruiz, ``{Quasi-maximum likelihood estimation of stochastic volatility
  models},'' \emph{Journal of Econometrics}, vol.~63, pp. 289--306, 1994.

\bibitem{1994:Harvey}
A.~Harvey, E.~Ruiz, and N.~Shephard, ``{Multivariate stochastic variance
  models},'' \emph{The Review of Economic Studies}, vol.~61, no.~2, pp.
  247--264, 1994.

\bibitem{2009:Koopman}
B.~Jungbacker and S.~J. Koopman, ``{Parameter estimation and practical aspects
  of modeling stochastic volatility},'' \emph{Handbook of Financial Time
  Series}.

\bibitem{2006:Tims}
B.~Tims and R.~Mahieu, ``{A range-based multivariate stochastic volatility
  model for exchange rates},'' \emph{Econometric Reviews}, vol.~25, no. 2-3,
  pp. 409--424, 2006.

\bibitem{2006:Dahlhaus}
R.~Dahlhaus and S.~S. Rao, ``{Statistical inference for time-varying ARCH
  processes},'' \emph{The Annals of Statistics}, vol.~34, no.~3, pp.
  1075--1114, 2006.

\bibitem{2008:Engle}
R.~F. Engle and J.~G. Rangel, ``{The spline-GARCH model for low-frequency
  volatility and its global macroeconomic causes},'' \emph{The Review of
  Financial Studies}, vol.~21, no.~3, pp. 1187--1222, 2008.

\bibitem{2007:Elliott}
R.~J. Elliott and C.~B. Hyndman, ``{Parameter estimation in commodity markets:
  A filtering approach},'' \emph{Journal of Economic Dynamics and Control},
  vol.~31, no.~7, pp. 2350--2373, 2007.

\bibitem{2013:Ossandon}
S.~Ossand{\'o}n and N.~Bahamonde, ``{A new nonlinear formulation for GARCH
  models},'' \emph{Comptes Rendus Mathematique}, vol. 351, no. 5-6, pp.
  235--239, 2013.

\bibitem{2017:Ferreira}
G.~Ferreira, J.~P. Navarrete, F.~J. Rodr{\'\i}guez-Cort{\'e}s, and J.~Mateu,
  ``{Estimation and prediction of time-varying GARCH models through a
  state-space representation: A computational approach},'' \emph{Journal of
  Statistical Computation and Simulation}, vol.~87, no.~12, pp. 2430--2449,
  2017.

\bibitem{1998:Fridman}
M.~Fridman and L.~Harris, ``{A Maximum likelihood approach for non-Gaussian
  stochastic volatility models},'' \emph{Journal of Business \& Economic
  Statistics}, vol.~16, no.~3, pp. 284--291, 1998.

\bibitem{2004:Jacquier}
E.~Jacquier, N.~G. Polson, and P.~E. Rossi, ``{Bayesian analysis of stochastic
  volatility models with fat-tails and correlated errors},'' \emph{Journal of
  Econometrics}, vol. 122, no.~1, pp. 185--212, 2004.

\bibitem{2014:Harvey}
A.~Harvey and A.~Luati, ``{Filtering with heavy tails},'' \emph{Journal of the
  American Statistical Association}, vol. 109, no. 507, pp. 1112--1122, 2014.

\bibitem{2005:Gibson}
S.~Gibson and B.~Ninness, ``{Robust maximum-likelihood estimation of
  multivariable dynamic systems},'' \emph{Automatica}, vol.~41, no.~10, pp.
  1667--1682, Oct. 2005.

\bibitem{2003:Pieczynski}
W.~Pieczynski and F.~Desbouvries, ``{Kalman filtering using pairwise Gaussian
  models},'' in \emph{IEEE International Conference on Acoustics, Speech, and
  Signal Processing}, vol.~VI, Hong-Kong, Apr. 2003, pp. 57--60.

\bibitem{2011:Lanchantin}
P.~Lanchantin, J.~Lapuyade-Lahorgue, and W.~Pieczynski, ``{Unsupervised
  segmentation of randomly switching data hidden with non-Gaussian correlated
  noise},'' \emph{Signal Processing}, vol.~91, no.~2, pp. 163--175, 2011.

\bibitem{1982:Shumway}
R.~H. Shumway and D.~S. Stoffer, ``{An approach to time series smoothing and
  forecasting using the EM algorithm},'' \emph{Journal of time series
  analysis}, vol.~3, no.~4, pp. 253--264, 1982.

\bibitem{1993:Koopman}
S.~J. Koopman, ``{Disturbance smoother for state space models},''
  \emph{Biometrika}, vol.~80, no.~1, pp. 117--126, 1993.

\bibitem{2013:Nemesin}
V.~N{\'e}mesin and S.~Derrode, ``{Robust blind pairwise Kalman algorithms using
  QR decompositions},'' \emph{IEEE Transactions on Signal Processing}, vol.~61,
  no.~1, pp. 5--9, Jan. 2013.

\bibitem{1977:Dempster}
A.~P. Dempster, N.~M. Laird, and D.~B. Rubin, ``{Maximum likelihood from
  incomplete data via the EM algorithm},'' \emph{Journal of the Royal
  Statistical Society. Series B (Methodological)}, pp. 1--38, 1977.

\bibitem{2014:Mader}
W.~Mader, Y.~Linke, M.~Mader, L.~Sommerlade, J.~Timmer, and B.~Schelter, ``{A
  numerically efficient implementation of the expectation maximization
  algorithm for state space models},'' \emph{Applied Mathematics and
  Computation}, vol. 241, pp. 222--232, 2014.

\bibitem{2015:Nemesin}
V.~N{\'e}mesin and S.~Derrode, ``{Robust partial-learning in linear Gaussian
  systems},'' \emph{IEEE Transactions on Automatic Control}, vol.~60, no.~9,
  pp. 2518--2523, Sep. 2015.

\bibitem{1974:Gupta}
N.~K. Gupta and R.~K. Mehra, ``{Computational aspects of maximum likelihood
  estimation and reduction in sensitivity function calculations},'' \emph{IEEE
  Transactions on Automatic Control}, vol. AC-19, no.~6, pp. 774--783, Dec.
  1974.

\bibitem{1989:Zadrozny}
P.~A. Zadrozny, ``{Analytic derivatives for estimation of linear dynamic
  models},'' \emph{Computers and Mathematics with Applications}, vol.~18,
  no.~6, pp. 539--553, 1989.

\bibitem{2000:Klein}
A.~Klein and H.~Neudecker, ``{A direct derivation of the exact Fisher
  information matrix of Gaussian vector state space models},'' \emph{Linear
  Algebra and its Applications}, vol. 321, no.~1, pp. 233--238, 2000.

\bibitem{1988:Segal}
M.~Segal and E.~Weinstein, ``{A new method for evaluating the log-likelihood
  gradient (score) of linear dynamic systems},'' \emph{IEEE Transactions on
  Automatic Control}, vol.~33, no.~8, pp. 763--766, Aug. 1988.

\bibitem{1992:Koopman}
S.~J. Koopman and N.~Shephard, ``{Exact score for time series models in state
  space form},'' \emph{Biometrika}, pp. 823--826, 1992.

\bibitem{2008:Wills}
A.~Wills and B.~Ninness, ``{On gradient-based search for multivariable system
  estimates},'' \emph{IEEE Transactions on Automatic Control}, vol.~53, no.~1,
  pp. 298--306, Jan. 2008.

\bibitem{2013:Tsyganova:IEEE}
J.~V. Tsyganova and M.~V. Kulikova, ``{State sensitivity evaluation within UD
  based array covariance filter},'' \emph{IEEE Transactions on Automatic
  Control}, vol.~58, no.~11, pp. 2944--2950, Nov. 2013.

\bibitem{2017:Kulikova:IFAC}
M.~V. Kulikova and J.~V. Tsyganova, ``{The UD-based approach for designing
  pairwise Kalman filtering algorithms},'' \emph{IFAC-PapersOnLine}, vol.~50,
  no.~1, pp. 1619--1624, 2017.

\bibitem{2015:Grewal:book}
M.~S. Grewal and A.~P. Andrews, \emph{{Kalman Filtering: Theory and Practice
  using MATLAB}}, 4th~ed.\hskip 1em plus 0.5em minus 0.4em\relax New Jersey:
  John Wiley \& Sons, 2015.

\bibitem{2016:Boiroux}
D.~Boiroux, R.~Juhl, H.~Madsen, and J.~B. J{\o}rgensen, ``{An efficient
  UD-based algorithm for the computation of maximum likelihood sensitivity of
  continuous-discrete systems},'' in \emph{The 55th Conference on Decision and
  Control (CDC)}, 2016, pp. 3048--3053.

\bibitem{2009:Kulikova:IEEE}
M.~V. Kulikova, ``{Likelihood gradient evaluation using square-root covariance
  filters},'' \emph{IEEE Transactions on Automatic Control}, vol.~54, no.~3,
  pp. 646--651, Mar. 2009.

\bibitem{2017:Tsyganova:IEEE}
J.~V. Tsyganova and M.~V. Kulikova, ``{SVD-based Kalman filter derivative
  computation},'' \emph{IEEE Transactions on Automatic Control}, vol.~62,
  no.~9, pp. 4869--4875, Sep. 2017.

\bibitem{2017:Kulikova:IEEE:PKF}
M.~V. Kulikova, ``{Gradient-based parameter estimation in pairwise linear
  Gaussian system},'' \emph{IEEE Transactions on Automatic Control}, vol.~62,
  no.~3, pp. 1511--1517, 2017.

\bibitem{2018:Boureghda}
M.~Boureghda and T.~Bouden, ``{A deconvolution scheme for the stochastic
  metabolic/hemodynamic model (sMHM) based on the square root cubature Kalman
  filter and maximum likelihood estimation},'' \emph{Biomedical Signal
  Processing and Control}, vol.~45, pp. 284--304, 2018.

\bibitem{1999:Zalewska}
A.~Zalewska-Mitura and S.~G. Hall, ``{Examining the first stages of market
  performance: A test for evolving market efficiency},'' \emph{Economics
  Letters}, vol.~64, no.~1, pp. 1--12, 1999.

\bibitem{2000:Kailath:book}
T.~Kailath, A.~H. Sayed, and B.~Hassibi, \emph{{Linear Estimation}}.\hskip 1em
  plus 0.5em minus 0.4em\relax New Jersey: Prentice Hall, 2000.

\bibitem{1986:Bollerslev}
T.~Bollerslev, ``{Generalized autoregressive conditional heteroskedasticity},''
  \emph{Journal of Econometrics}, vol.~31, no.~1, pp. 307--327, 1986.

\bibitem{1965:Schweppe}
F.~C. Schweppe, ``{Evaluation of likelihood functions for Gaussian signals},''
  \emph{IEEE transactions on Information Theory}, vol. IT-11, pp. 61--70, Jan.
  1965.

\bibitem{1976:Thornton:PhD}
C.~L. Thornton, ``{Triangular covariance factorizations for Kalman
  filtering},'' Ph.D. dissertation, University of California at Los Angeles,
  1976.

\bibitem{1977:Bierman}
G.~J. Bierman and C.~L. Thornton, ``{Numerical comparison of Kalman filter
  algorithms: Orbit determination case study},'' \emph{Automatica}, vol.~13,
  no.~1, pp. 23--35, 1977.

\bibitem{1977:Bierman:book}
G.~J. Bierman, \emph{{Factorization Methods For Discrete Sequential
  Estimation}}.\hskip 1em plus 0.5em minus 0.4em\relax New York: Academic
  Press, 1977.

\bibitem{1970:Fama}
E.~F. Fama, ``{Efficient capital markets: A review of theory and empirical
  work},'' \emph{The Journal of Finance}, vol.~25, no.~2, pp. 383--417, 1970.

\bibitem{1972:Black}
F.~Black and M.~Scholes, ``{The valuation of option contracts and a test of
  market efficiency},'' \emph{Journal of Finance}, vol.~27, no.~2, pp.
  399--417, 1972.

\bibitem{2004:Lo}
A.~W. Lo, ``{The adaptive markets hypothesis},'' \emph{The Journal of Portfolio
  Management}, vol.~30, no.~5, pp. 15--29, 2004.

\end{thebibliography}
%              BibTex_Library/KF_Riccati,%
%              BibTex_Library/KFDiff_Riccati,%
%              BibTex_Library/KF_Finance,%
%              BibTex_Library/Lit_Finance,%
%              BibTex_Library/KF_MLE,%
%              BibTex_Library/KF_Applications,%
%              BibTex_Library/Lin_Algebra}

\end{document}